\def\cleft{\hbox{[\kern-.16em\hbox{[}}}
\def\cright{\hbox{]\kern-.16em\hbox{]}}}
\newtheorem{defi}{Definition}[section]
\newtheorem{theo}[defi]{Theorem}
\newtheorem{lemma}[defi]{Lemma}
\newtheorem{prop}[defi]{Proposition}
\newtheorem{coro}[defi]{Corollary}
\begin{document}
\author{Dominique Bourn and James R.A. Gray}
\date{}
\title{Aspects of algebraic exponentiation}

\maketitle

\begin{abstract}
We analyse some aspects of the notion of \emph{algebraic exponentiation} introduced by the second author \cite{Gr} and satisfied by the category $Gp$ of groups. We show how this notion provides a new approach to the categorical-algebraic question of the centralization. We explore, in the category $Gp$, the unusual universal properties and constructions determined by this notion, and we show how it is the origin of various properties of this category.
\end{abstract}

\textbf{Introduction}\\

In \cite{Gr}, the second author observed, by means of very straightforward Kan extension arguments, that, in the category $Gp$ of groups, the change of base functor with respect to the fibration of points along any group homomorphism $h:X\rightarrow Y$:
$$h^*: Pt_YGp\rightarrow Pt_XGp$$
has a right adjoint, revealing, for the category $Gp$, a property having a certain analogy with the property, for a category $\mathbb E$, of being \emph{locally cartesian closed}, namely the property that the following change of base functor:
$$h^*:\mathbb E/Y\rightarrow \mathbb E/X$$
has a right adjoint for any map $h$. On the other hand, he showed moreover that, in the algebraic context of unital categories $\mathbb C$, the fact that the change of base functor along the terminal map:
$$\tau_X^*: \mathbb C=Pt_1\mathbb C\rightarrow Pt_X\mathbb C$$
has  right adjoint is related to the existence of some generalized notion of centralization.

\smallskip 

Now, the property of local cartesian closedness is very powerful and well known to be shared, for instance, by any elementary topos. It is not worth insisting on its significance. We shall develop some aspects of this new concept of algebraic exponentiation.

\smallskip 

In Section 1), we shall more deeply analyse the parallelism with the cartesian closedness and we shall strictly elucidate the relationship with the classical notion of centralizer, in such a way that, \emph{when a unital category $\mathbb C$ is regular, any change of base functor $\tau_X^*$, as above, has a right adjoint if and only if any subobject has a centralizer}, revealing that, behind the notions of centre and centralizer, there was an unexpected wider-ranging phenomenon of functorial nature. In Section 2) we shall show that, in the Mal'cev context, algebraic exponentiation along split epimorphisms allows us to extend the existence of centralizers from subobjects to equivalence relations; accordingly, when the category $\mathbb C$ is moreover exact, we get a Schreier-Mac Lane extension theorem, according to \cite{B16}. In Section 3) we shall investigate the stability properties of algebraic exponentiation and in particular we shall how, in the efficiently regular context, the existence of a right adjoint to: $$h^*: Pt_Y\mathbb C\rightarrow Pt_X\mathbb C$$ can be extended from split epimorphims to regular epimorphisms. In Section 4), in the stricter context of protomodular categories, we give a detailed description of some constructions determined by the algebraic exponentiation of all morphisms, and in particular we shall investigate two main consequences, namely \emph{strong protomodularity} (which guarantees, among other things, that the commutation of two equivalence relations $(R,S)$ is characterized by the commutation of their associated normal subobjects $(I_R,I_S)$ \cite{B25}) and \emph{peri-abelianness} (which is strongly related to the cohomology of groups \cite{B15}). These last points show us how some well identified particular properties of the category $Gp$ of groups originate from this algebraic exponentiation property. In this same category $Gp$, we shall explore in detail the very unusual universal properties and constructions involved in algebraic exponentiation. 

On the other hand we shall enlarge the list of examples (Section 2.1) to some categories of topological models, such as topological groups and topological rings, and to non-pointed categories such as the fibres $Grd_X$ above the set $X$ of the fibration $(\;)_0:Grd\rightarrow Set$ from groupoids to sets whose fibre $Grd_1$ above $1$ is nothing but the category $Gp$ itself.

\section{Cartesian closedness $\;$ and $\;$ algebraic cartesian  closedness}

\subsection{Slice categories and cartesian closedness}\label{pres}

Let $\mathbb E$ be any finitely complete category and $Y$ an object in $\mathbb E$. Any object $f:X\rightarrow Y$ in the slice category $\mathbb E/Y$ has a specific presentation as the domain of an equalizer of a split pair in $\mathbb E/Y$:
$$
\xymatrix@=30pt{
{X\;} \ar@{>->}[r]^{(f,1)}\ar[rd]_{f} & Y\times X \ar@<1ex>[r]^{s_0\times X} \ar@<-1ex>[r]_{Y\times (f,1)}\ar[d]_{p_Y} & Y\times (Y \times X) \ar[ld]^{p_Y} \ar[l]\\
 & Y 
              }
$$
(where the common splitting of the parallel pair is the map $p_0\times X$) which actually comes from a monad on the slice category $\mathbb E/Y$. This presentation can be extended to the category $Pt_Y\mathbb E$ of points above $Y$, namely to the split epimorphisms, in the following way:
$$
\xymatrix@=30pt{
{X\;} \ar@{>->}[r]^{(f,1)}\ar@<-1ex>[rd]_{f} & Y\times X \ar@<1ex>[r]^{s_0\times X} \ar@<-1ex>[r]_{Y\times (f,1)}\ar@<-1ex>[d]_{p_Y} & Y\times (Y \times X) \ar@<1ex>[ld]_{p_Y}  \ar[l]\\
 & Y \ar[lu]_s \ar[u]_{(1,s)} \ar@<-2ex>[ru]_{(s_0,s)}
              }
$$ 
We shall need later on the following collateral consequence: when the category $\mathbb E$ is pointed, we get the kernel of $f$, from the previous diagram, by the following equalizer:
$$
\xymatrix@=30pt{
{Ker f\;} \ar@{>->}[r]^{k_f} &  X \ar@<1ex>[r]^{\iota_X} \ar@<-1ex>[r]_{(f,1)} & Y \times X  \ar[l]
              }
$$

Now consider the change of base along the terminal map $\tau_Y:Y\rightarrow 1$:
$$\tau_Y^*:\mathbb E\longrightarrow \mathbb E/Y$$
According to our initial remark and because of the left exactness of right adjoints, the question of the existence of a right adjoint to $\tau_Y^*$ is reduced to the existence of cofree structures for the projections $p_Y:Y\times X \rightarrow Y$ in $\mathbb E/Y$; and this cofree structure is nothing but the exponential $X^Y$. In other words:\\

\noindent \emph{The functor $\tau_Y^*:\mathbb E\rightarrow \mathbb E/Y$ has a right adjoint if and only if the functor $Y\times -:\mathbb E\rightarrow \mathbb E$ has a right adjoint. The category $\mathbb E$ is cartesian closed if and only if any functor $\tau_Y^*$ has a right adjoint.}

\subsection{Fibration of points and algebraic cartesian closedness}

In an algebraic context, no such exponential does exist in general, among other things because of the existence of a zero object in the main instances, and consequently no such right adjoint functor to $\tau_Y^*$. However we have the possibility to consider the existence of a right adjoint to the ``change of base'' functor $\tau_Y^*:Pt_1\mathbb E\rightarrow Pt_Y\mathbb E$, i.e, here, ``only'' with respect to the points of $\mathbb E/Y$ and $\mathbb E$; and even more generally the existence of a right adjoint to the change of base functor $f^*:Pt_{\mathbb E}(Y)\rightarrow Pt_{\mathbb E}(X)$ for a any map $f:X\rightarrow Y$. 

This idea was first introduced by the second author \cite{Gr} who showed these right adjoints to $f^*$ do exist in the categories $Gp$ of groups and $R$-$Lie$ of Lie $R$-algebras, for any commmutative ring $R$ \cite{Gr1}.

The previous observation above concerning the equalizer presentation of any split epimorphism applies now for the functor:
$$\tau_Y^*:Pt_1\mathbb E\longrightarrow Pt_Y\mathbb E$$
The question of the existence of a right adjoint to $\tau_Y^*$ is then reduced to the existence of cofree structures for the split epimorphisms $(p_Y,(1,u)):Y\times X \rightarrow Y$ in $Pt_Y\mathbb E$ where $u:Y\rightarrowtail X$ is can be chosen to be a monomorphism.

We shall work now more specifically in the algebraic context of a \emph{unital} \cite{BB}, or even \emph{wealky unital} category \cite{MF} $\mathbb C$. Recall:
\begin{defi}
A category $\mathbb C$ is unital (resp. weakly unital) when it is pointed, is finitety complete, and is such that any pair of maps of the following form:
$$X \stackrel{(1_X,0)}{\rightarrowtail} X\times Y \stackrel{(0,1_Y)}{\leftarrowtail} Y$$
is jointly strongly epic (resp. jointly epic).
\end{defi}
Accordingly a finitely complete pointed category $\mathbb C$ is unital if and only if the supremum of the two previous subobjects is $1_{X\times Y}$.
In these contexts, the functor $\tau_Y^*$ becomes fully faithful. Recall also that there is then an intrinsic notion of commutation for any pair of maps with same codomain. We say that a pair $(f,g)$ commutes:
$$
\xymatrix@=15pt{
{X \;} \ar@{>->}[r]^{(1,0)} \ar[rd]_f & X\times Y \ar@{.>}[d]^{\phi} & {\; Y}\ar@{>->}[l]_{(0,1)} \ar[ld]^g\\
& Z
}
$$
when there exits a factorization $\phi$ (called the \emph{cooperator} of this pair), the uniqueness of $\phi$ making this existence a property of the pair $(f,g)$.
In these algebraic contexts, the meaning of the existence of a right adjoint to the functor $\tau_Y^*$ above can be made much more algebraically civilized:
\begin{prop}\label{centr}
Suppose $\mathbb C$ is a unital (resp. weakly unital) category. The functor $\tau_Y^*:Pt_1\mathbb E\longrightarrow Pt_Y\mathbb E$ admits a right adjoint $\Phi_Y$ if and only if any subobject $u: Y\rightarrowtail X$ with domain $Y$ admits a universal map $\zeta_u: Z[u] \rightarrow X$ commuting with it. This universal map $\zeta_u$ is necessarily a monomorphism.
\end{prop}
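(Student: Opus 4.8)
The plan is to translate the existence of the right adjoint into a representability statement for each split epimorphism of the reduced form $(p_Y,(1,u)):Y\times X\to Y$, and to recognise the representing object as the centralizer $Z[u]$. By the reduction recalled above, $\tau_Y^*$ admits a right adjoint if and only if each such point $P_u=(p_Y,(1,u))$, with $u$ a monomorphism, carries a cofree structure, i.e. the contravariant functor $X'\mapsto Pt_Y\mathbb C(\tau_Y^*X',P_u)$ is representable in $\mathbb C$. So the first and central step is to compute this hom-functor.

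First I would unwind a morphism $g:\tau_Y^*X'\to P_u$ in $Pt_Y\mathbb C$. Since $\tau_Y^*X'=(p_Y:Y\times X'\to Y,(1,0))$, such a $g$ is a map $Y\times X'\to Y\times X$ over $Y$ sending the section $(1,0)$ to $(1,u)$; writing $g=(p_Y,g_2)$, the condition over $Y$ forces the first component to be the projection, so $g$ is completely encoded by $g_2:Y\times X'\to X$ with $g_2\cdot(1,0)=u$. Here the unital (resp. weakly unital) hypothesis enters decisively: because the two injections into $Y\times X'$ are jointly (strongly) epic, such a $g_2$ is precisely a \emph{cooperator}, and is uniquely determined by its other restriction $w:=g_2\cdot(0,1):X'\to X$. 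Hence $g\mapsto w$ sets up a bijection $Pt_Y\mathbb C(\tau_Y^*X',P_u)\cong\{\,w:X'\to X\mid (u,w)\;\mbox{commutes}\,\}$, natural in $X'$ (the naturality being a direct check on the $(0,1)$-restriction).

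Then the statement falls out by comparing universal properties. If every subobject $u$ admits $\zeta_u:Z[u]\to X$, its defining property gives $\{\,w\mid(u,w)\;\mbox{commutes}\,\}\cong\mathbb C(X',Z[u])$, so $P_u$ is representable with $\Phi_YP_u=Z[u]$, and by the reduction $\tau_Y^*$ has a right adjoint. Conversely, a right adjoint $\Phi_Y$ makes $X'\mapsto\{\,w\mid(u,w)\;\mbox{commutes}\,\}$ representable; evaluating at $X'=\Phi_YP_u$ on the identity map extracts the universal commuting map $\zeta_u:\Phi_YP_u\to X$, so $Z[u]$ exists.

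Finally, for the monomorphism claim: given $a,b:T\to Z[u]$ with $\zeta_u a=\zeta_u b=:w$, I would observe that the commutation of $(u,\zeta_u)$ forces $(u,w)$ to commute (commutation is stable under precomposition of the second leg, the cooperator of $(u,w)$ being that of $(u,\zeta_u)$ composed with $1_Y\times a$), and then invoke the \emph{uniqueness} in the universal property of $\zeta_u$ to conclude $a=b$. I expect the main subtlety to be exactly this reliance on uniqueness of cooperators — guaranteed by the joint (strong) epimorphy of the injections in the (weakly) unital setting — which is what simultaneously makes the hom-set bijection clean and renders $\zeta_u$ monic; everything else is formal once the hom-functor has been identified.
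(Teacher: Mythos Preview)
Your proof is correct and follows essentially the same route as the paper: the identification of morphisms $\tau_Y^*X'\to P_u$ with maps $w:X'\to X$ commuting with $u$ (via the cooperator $g_2$) is exactly the paper's observation that the counit $(p_Y,\phi)$ exhibits $\phi$ as the cooperator of $(u,\zeta_u)$, just unwound more explicitly on the hom-functor side. Your monomorphism argument is likewise the same as the paper's, which phrases it with the kernel pair $p_0,p_1:R[\zeta_u]\rightrightarrows Z[u]$ --- the universal instance of your parallel arrows $a,b$ --- and concludes $p_0=p_1$ from the uniqueness of factorisations through $\zeta_u$.
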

\proof
The universal property of $\zeta_u$ translates exactly the universal property of the cofree structure of the split epimorphism $(p_Y,(1,u)): Y\times X \rightleftarrows Y$ with respect to $\tau_Y^*$. Indeed, the natural transformation $\varepsilon: \tau_Y^*.\Phi_Y\Rightarrow Id$ is produced by a map in $Pt_Y\mathbb C$:
$$
\xymatrix@=15pt{
Y\times Z[u] \ar[rr]^{(p_Y,\phi)}\ar@<-1ex>[rd]_{p_Y} & & Y\times X \ar@<-1ex>[ld]_{p_Y}\\
& Y \ar[lu]_{\iota_Y} \ar[ru]_{(1,u)}
              }
$$
which makes $\phi:Y\times Z[u]\rightarrow X$ the \emph{cooperator} of the commuting pair $(u,\zeta_u)$, with $\zeta_u=\phi.\iota_{Z[u]}$.
Consider now the kernel equivalence of the map $\zeta_u$:
$$
\xymatrix@=30pt{
R[\zeta_u] \ar@<1ex>[r]^{p_0} \ar@<-1ex>[r]_{p_1} & Z[u] \ar[r]^{\zeta_u} & X 
              }
$$
The map $\zeta_u.p_i$ commutes with $u$ by means of the cooperator $\phi. (Y\times p_i)$. Its factorization through $\zeta_u$ being unique, we get $p_0=p_1$; and the map $\zeta_u$ is a monomorphism.
\endproof
Now, starting from any split epimorphism $(f,s)$, the cofree structure $\Phi_Y[f,s]$ is the equalizer of the following upper horizontal parallel pair induced by the pair $(s_0\times X,Y\times (f,1))$:
$$
\xymatrix@=20pt{
{\Phi_Y[f,s]\;} \ar@{>->}[r] \ar[d] & Z[s] \ar@<1ex>[r] \ar@<-1ex>[r] \ar[d]_{\zeta_s} & Z[(1,s)] \ar[d]^{\zeta_{(1,s)}} \\
{Ker f\;} \ar@{>->}[r]_{k_f} & X \ar@<1ex>[r]^{\iota_X} \ar@<-1ex>[r]_{(f,1)} & Y \times X 
             }
$$
Since the lower line is also an equalizer and the maps $(\zeta_{s},\zeta_{(1,s)})$ are monomorphisms, the left-hand side square is a pullback and  $\Phi_Y[f,s]=Ker f\cap Z[s]$. 
According to the previous proposition and the parallelism with cartesian closedness, we shall introduce the following:
\begin{defi}
A category $\mathbb C$ with products is said to be algebraically cartesian closed (a.c.c.) when any functor $\tau_Y^*:Pt_1\mathbb E\longrightarrow Pt_Y\mathbb E$ has a right adjoint.
\end{defi}
\noindent On the other hand, we have the quite classical:
\begin{defi}
Suppose $\mathbb C$ is a unital (resp. weakly unital) category. The centralizer of a subobject $u: Y\rightarrowtail X$ is the largest subobject commuting with it, i.e. the universal monomorphism commuting with $u$.
\end{defi}
So, \emph{when $\mathbb C$ is a unital category which is algebraically cartesian closed, any subobject $u$ has a centralizer $\zeta_u$}. When $\mathbb C$ is regular (as it is the case for any variety of universal algebras) the converse is true:
\begin{prop}\label{centreg}
Suppose $\mathbb C$ is a regular unital category. Then it is algebraically cartesian closed if and only if any subobject $u:Y\rightarrowtail X$ has a centralizer.
\end{prop}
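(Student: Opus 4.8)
The forward implication is already contained in the remarks preceding the statement: if $\mathbb C$ is a.c.c., then Proposition \ref{centr} provides, for each subobject $u:Y\rightarrowtail X$, a universal map $\zeta_u:Z[u]\to X$ commuting with $u$, and this map is a monomorphism; being universal among \emph{all} maps commuting with $u$, it is a fortiori universal among the \emph{monomorphisms} commuting with $u$, hence it is precisely the centralizer of $u$. So the substance of the proposition is the converse, and this is exactly where regularity should enter.

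So the plan is to assume that every subobject has a centralizer, to fix $u:Y\rightarrowtail X$ and its centralizer $z:Z\rightarrowtail X$, and to promote $z$ into the universal map $\zeta_u$ of Proposition \ref{centr}. Concretely, it suffices to show that every map $g:W\to X$ commuting with $u$ — and not merely every monomorphism — factors through $z$, the factorization being automatically unique since $z$ is monic. I would obtain this by factoring $g$ through its regular image: writing $g=m\circ e$ with $e:W\twoheadrightarrow I$ a regular epimorphism and $m:I\rightarrowtail X$ a monomorphism, if I can show that $m$ again commutes with $u$, then $m$ factors through the centralizer $z$ by the defining universal property of $z$, and hence so does $g=m\circ e$. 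Proposition \ref{centr} then yields a right adjoint $\Phi_Y$ for every $Y$, which is exactly a.c.c.

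The heart of the matter — and the step I expect to be the main obstacle — is the lemma that commutation is inherited by regular images: if $(u,g)$ commutes and $g=m\circ e$ as above, then $(u,m)$ commutes. Let $\phi:Y\times W\to X$ be the cooperator of $(u,g)$. Since $\mathbb C$ is regular, $Y\times e:Y\times W\to Y\times I$ is again a regular epimorphism, being the pullback of $e$ along a product projection; hence it is the coequalizer of its kernel pair $Y\times p_0,\,Y\times p_1:Y\times R[e]\rightrightarrows Y\times W$, where $(p_0,p_1)$ is the kernel pair of $e$. I would then show that $\phi$ coequalizes this pair, factor $\phi=\psi\circ(Y\times e)$ through the coequalizer, and recognise the resulting $\psi:Y\times I\to X$ as the cooperator of $(u,m)$.

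To check $\phi\circ(Y\times p_0)=\phi\circ(Y\times p_1)$ I would use unitality: the pair $(1,0):Y\rightarrowtail Y\times R[e]$ and $(0,1):R[e]\rightarrowtail Y\times R[e]$ is jointly strongly epic, hence jointly epic, so it is enough to compare the two composites after restriction along these inclusions. Along $(1,0)$ both composites collapse to $\phi\circ(1,0)=u$, while along $(0,1)$ they become $g\circ p_0$ and $g\circ p_1$, which agree because $g=m\circ e$ coequalizes $(p_0,p_1)$. This produces the unique $\psi$ with $\psi\circ(Y\times e)=\phi$. Finally, writing $(1,0)$ and $(0,1)$ for the canonical inclusions into the relevant product, the relations $(Y\times e)\circ(1,0)=(1,0)$ and $(Y\times e)\circ(0,1)=(0,1)\circ e$ give $\psi\circ(1,0)=\phi\circ(1,0)=u$ and $\psi\circ(0,1)\circ e=\phi\circ(0,1)=g=m\circ e$, whence $\psi\circ(0,1)=m$ since $e$ is epic; thus $\psi$ is the cooperator of $(u,m)$, which completes the lemma and with it the proof.
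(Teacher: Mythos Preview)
Your proof is correct and follows exactly the paper's approach: factor an arbitrary map commuting with $u$ through its regular image and use that commutation descends along the regular epimorphism, so that the monic part still commutes with $u$ and hence factors through the centralizer. The only difference is that the paper invokes this descent of cooperators as a known fact about regular unital categories, whereas you spell out the argument (via the kernel pair of $Y\times e$ and joint epicness of the product injections); the strategy is identical.
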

\proof
Let $h:T\rightarrow X$ be any map commuting with $u$. Consider the canonical decomposition of $h$ through a regular epimorphism $T\stackrel{\rho}{\twoheadrightarrow} V\stackrel{\bar h}{\rightarrowtail} X$. Then, since $\rho$ is a regular epimorphism the pair $(u,\bar h)$ of subobjects does commute in $\mathbb C$; so $\bar h$, and thus $h$, factorizes through the centralizer $\zeta_u: Z[u] \rightarrowtail X$ of $u$ which therefore becomes also the universal map commuting with $u$.
\endproof

The main consequence of the algebraic cartesian closedness is a specific commutation of limits: the functor $\tau_Y^*$, having a right adjoint, preserves the colimits which exist in $\mathbb C$. In particular, when a direct sum exits in $\mathbb C$: $U \stackrel{\iota_U}{\rightarrow} U+V \stackrel{\iota_V}{\leftarrow} V$ the following square is a pushout in $\mathbb C/Y$ and thus in $\mathbb C$:
$$
\xymatrix@=20pt{
 Y\times U \ar[rr]^{Y\times \iota_U}  && Y\times(U+V) \\
 Y \ar[u]^{(1_Y,0)} \ar[rr]_{(1_Y,0)}  && Y\times V \ar[u]_{Y\times \iota_V}       }
$$

\subsection{Examples}

The unital category $Mon$ of monoids is unital and, as a variety of algebras, is exact and therefore regular. It has centralizers and thus is algebraically cartesian closed. More generally any unital variety of algebras with centralizers (as the categories $Gp$ of groups or $CRg$ of commutative rings) is algebraically cartesian closed. In the category $Gp$ of groups any split epimorphism $(f,s)$ above $Y$ is of the kind $Y\ltimes_{\psi}K \rightleftarrows Y$ where $\psi$ is the associated action of the group $Y$ on the group $K$. Then $\Phi_Y[f,s]$ is nothing but the subgroup $\{k\in K/ \forall y\in Y, \; ^yk=k\}$ of $K$ of the invariant elements under the action $\psi$.

\subsection{Reflection of commuting pairs}

There is, at the level of unital categories, a very simple result which will be of consequence later on.

\begin{prop}\label{refcom}
Let $\mathbb C$ be a unital (resp. weakly unital) category and $(\Gamma,\epsilon,\nu)$ a left exact comonad on it. Then the category $Coalg \Gamma$ of $\Gamma$-coalgebras is unital (resp. weakly unital) and the left exact forgetful functor $U: Coalg \Gamma \rightarrow \mathbb C$ reflects the commuting pairs.
\end{prop}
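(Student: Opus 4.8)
The plan is to lean on two standard features of the forgetful functor $U:Coalg\,\Gamma\to\mathbb C$: it is \emph{faithful}, and, since $\Gamma$ is left exact, it \emph{creates finite limits}, so it preserves products and monomorphisms and is itself left exact. First I would check that $Coalg\,\Gamma$ is pointed. Because $\Gamma$ preserves the terminal object $1=0$, the zero object $0$ of $\mathbb C$ carries a (unique) $\Gamma$-coalgebra structure; moreover the unique maps $0\to C$ and $C\to 0$ are automatically coalgebra morphisms, the defining squares commuting because $0$ is initial, resp. terminal. Hence $0$ is a zero object of $Coalg\,\Gamma$. It follows that the ``axis'' morphisms $A\stackrel{(1,0)}{\rightarrowtail}A\times B\stackrel{(0,1)}{\leftarrowtail}B$ live in $Coalg\,\Gamma$, and that $U$ carries them to the corresponding axis maps of $\mathbb C$, since $U$ preserves the product and the zero object.

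For the weakly unital case it is then enough to observe that if two parallel coalgebra morphisms out of $A\times B$ agree after composition with $(1,0)$ and with $(0,1)$, then their $U$-images agree after composition with the $\mathbb C$-axes, hence coincide by weak unitality of $\mathbb C$, hence are equal by faithfulness of $U$. For the unital case I would instead use the factorization description of joint strong epimorphisms: if both axes factor through a monomorphism $m:(M,\mu)\rightarrowtail A\times B$ of $Coalg\,\Gamma$, then $Um$ is a monomorphism of $\mathbb C$ through which both $\mathbb C$-axes factor, so $Um$ is an isomorphism by unitality of $\mathbb C$. The step that genuinely uses the comonad is that $U$ \emph{reflects isomorphisms}: the $\mathbb C$-inverse of $Um$ is again a coalgebra morphism, by a one-line chase from the coalgebra identity for $m$. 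Thus $m$ is an isomorphism and the axis pair is jointly strongly epic, so $Coalg\,\Gamma$ is unital.

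For the reflection of commuting pairs, take coalgebra morphisms $f:(A,a)\to(C,c)$ and $g:(B,b)\to(C,c)$ whose underlying maps commute in $\mathbb C$ with cooperator $\phi:A\times B\to C$. I claim this same $\phi$ is a coalgebra morphism $(A\times B,d)\to(C,c)$, where $d$ is the product structure induced by $a\times b$ through $\Gamma(A\times B)\cong\Gamma A\times\Gamma B$; it is then the cooperator of $(f,g)$ in $Coalg\,\Gamma$. To prove the coalgebra identity $c\circ\phi=\Gamma\phi\circ d$ I would test both sides against the two axes. Precomposing with $(1,0)$ turns the left side into $c\circ f=\Gamma f\circ a$ (because $f$ is a coalgebra morphism) and the right side into $\Gamma\phi\circ\Gamma(1,0)\circ a=\Gamma f\circ a$ (because $(1,0)$ is a coalgebra morphism and $\phi\circ(1,0)=f$); the computation with $(0,1)$ is symmetric. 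As the two axes are jointly epic in $\mathbb C$, the two composites coincide, so $\phi$ is a coalgebra morphism and $(f,g)$ commutes in $Coalg\,\Gamma$.

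The main obstacle is the strong-epimorphism step of the unital case: every other part is forced by faithfulness and left exactness of $U$ together with the coordinate-axis tests, but passing from ``$Um$ is invertible in $\mathbb C$'' to ``$m$ is invertible in $Coalg\,\Gamma$'' is precisely where the comonad structure must be invoked, through the reflection of isomorphisms.
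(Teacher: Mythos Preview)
Your argument is correct and follows essentially the same route as the paper: the unital (resp.\ weakly unital) conclusion is drawn from $U$ being conservative (resp.\ faithful), and the reflection of commuting pairs is established by testing the coalgebra identity $c\circ\phi=\Gamma\phi\circ d$ against the two axis maps $(1,0)$ and $(0,1)$, exactly as the paper does. You supply more detail (explicit pointedness, the subobject-factorization formulation of joint strong epicness, the chase for reflection of isomorphisms), but the skeleton of the proof is the same.
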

\proof
Since the comonad $(\Gamma,\epsilon,\nu)$ is left exact and $\mathbb C$ is finitely complete, so is the category $Coalg \Gamma$, which is morover pointed since so is $\mathbb C$. The forgetful functor $U: Coalg \Gamma \rightarrow \mathbb C$ is left exact. The category $Coalg \Gamma$ is unital (resp. weakly unital) since the functor $U$ is conservative (resp. faithful). Now, suppose that we have a pair of morphisms in $Coalg \Gamma$:
$$(X,\xi) \stackrel{f}{\longrightarrow} (Z,\zeta) \stackrel{g}{\longleftarrow} (Y,\upsilon)$$
whose image by $U$ is endowed with a cooperator $\phi$. We have to show that this map $\phi$ is actually a map of coalgebras, namely that the following quadrangle commutes:
$$
\xymatrix@=15pt{
  {X\;} \ar@{>.>}[r]^{(1,0)} & X\times Y \ar[d]_{\phi} \ar[rd]^{\Gamma \xi\times \Gamma \upsilon} & {\;Y} \ar@{>.>}[l]_{(0,1)}\\
& Z \ar[rd]_{\zeta} & \Gamma X \times \Gamma Y \ar[d]^{\Gamma \phi}  \\
&   &  \Gamma Z 
} 
$$
which can be done by composition with the two upper horizontal maps.
\endproof

\subsection{Strongly unital categories}

A unital category $\mathbb C$ is strongly unital \cite{B0}, when in addition, for any object $Y$, the change of base functor $\tau_Y^*:\mathbb C\rightarrow Pt_Y\mathbb C$ is \emph{saturated on subobjects}, namely such that any subobject $R\rightarrowtail \tau_Y^*(Z)$ is, up to isomorphism, the image by $\tau_Y^*$ of some subobject $S\rightarrowtail Z$. In this context, the idempotent comonad associated with the algebraic cartesian closedness has a specific property: 
\begin{prop}\label{mono}
Suppose $\mathbb C$ is a strongly unital category. Suppose the functor $\tau_Y^*:Pt_1\mathbb C\longrightarrow Pt_Y\mathbb C$ admits a right adjoint $\Phi_Y$. Then the natural transformation of the induced idempotent left exact comonad $\varepsilon_Y: \tau_Y^*.\Phi_Y\Rightarrow Id$ is monomorphic. Moreover any subobject $j :\bullet \rightarrowtail \bullet$ in $Pt_Y\mathbb C$ produces a pullback in $Pt_Y\mathbb C$:
$$
\xymatrix@=20pt{
{\;\tau_Y^*.\Phi_Y(\bullet)\;} \ar@{>->}[r]^{\tau_Y^*.\Phi_Y(j)} \ar[d]_{\varepsilon_Y\bullet} & {\tau_Y^*.\Phi_Y(\bullet)\;} \ar[d]^{\varepsilon_Y\bullet} \\
{\bullet\;} \ar@{>->}[r]_{j} & \bullet
             }
$$
\end{prop}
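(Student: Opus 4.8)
The plan is to exploit the idempotency of the comonad together with the saturation property packaged in strong unitality. Since $\tau_Y^*$ is fully faithful, the unit of $\tau_Y^*\dashv\Phi_Y$ is invertible, so the comonad $G=\tau_Y^*\Phi_Y$ is idempotent; its coalgebras form a full coreflective subcategory $\mathcal F$ of $Pt_Y\mathbb C$ which is precisely the essential image of $\tau_Y^*$ (the objects on which $\varepsilon_Y$ is an isomorphism), and for every $B$ the counit $\varepsilon_Y B\colon GB\to B$ is the coreflection of $B$ into $\mathcal F$, so that every morphism from an object of $\mathcal F$ into $B$ factors uniquely through $\varepsilon_Y B$. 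The single extra input I would draw from strong unitality is this: since $\tau_Y^*$ is saturated on subobjects, every subobject of an object of $\mathcal F$ (that is, of some $\tau_Y^*(Z)$) again belongs to $\mathcal F$.

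To prove that $\varepsilon_Y B$ is a monomorphism I would form its kernel pair $p_0,p_1\colon R\rightrightarrows GB$ in $Pt_Y\mathbb C$. As $\tau_Y^*$ preserves finite products we have $GB\times_Y GB=\tau_Y^*(\Phi_Y B\times\Phi_Y B)\in\mathcal F$, so the subobject $R\rightarrowtail GB\times_Y GB$ lies in $\mathcal F$ by saturation. Now $\varepsilon_Y B\circ p_0=\varepsilon_Y B\circ p_1\colon R\to B$ is a morphism out of an object of $\mathcal F$, hence factors uniquely through the coreflection $\varepsilon_Y B$; since both $p_0$ and $p_1$ are such factorizations, uniqueness forces $p_0=p_1$ and $\varepsilon_Y B$ is monic. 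This is exactly the analogue, for the coreflection $\varepsilon_Y$, of the kernel-equivalence argument used for $\zeta_u$ in Proposition \ref{centr}.

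For the pullback statement, given a subobject $j\colon A\rightarrowtail B$ I would form $P=A\times_B GB$, the pullback of $j$ along $\varepsilon_Y B$; both projections $\pi_A\colon P\rightarrowtail A$ and $P\rightarrowtail GB$ are monic because $\varepsilon_Y B$ and $j$ are. Since $P\rightarrowtail GB=\tau_Y^*(\Phi_Y B)$, saturation gives $P\in\mathcal F$. Applying the coreflection universal property twice then identifies $P$ with $GA$: the monomorphism $\pi_A\colon P\to A$ factors as $\varepsilon_Y A\circ a$ for a unique $a\colon P\to GA$, while $j\circ\varepsilon_Y A\colon GA\to B$ factors through $\varepsilon_Y B$ and, paired with $\varepsilon_Y A$, induces $c\colon GA\to P$ with $\pi_A\circ c=\varepsilon_Y A$. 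The monos $\pi_A$ and $\varepsilon_Y A$ being left-cancellable, $a$ and $c$ are mutually inverse, so $GA\cong P$ compatibly with the projections and the naturality square is a pullback.

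The only genuine obstacle is locating the two auxiliary objects $R$ and $P$ inside $\mathcal F$: this is precisely where strong unitality (saturation on subobjects) is indispensable, for without it neither the kernel pair nor the intersection need be a coalgebra, and the coreflection universal property — which is what converts a pair of factorizations into an equality and realizes $GA$ as the pullback — would no longer apply. Everything else is formal bookkeeping with the idempotent comonad and its counit.
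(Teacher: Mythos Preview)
Your proof is correct, but the route differs from the paper's in one notable place.

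For the monomorphism claim, the paper does \emph{not} use saturation at all: it first reduces to the generating case $\varepsilon_Y(p_Y,(1,u))=(p_Y,\phi)\colon Y\times Z[u]\to Y\times X$, with $\phi$ the cooperator of $(u,\zeta_u)$, and then invokes Lemma~1.8.18 of \cite{BB}, which in a strongly unital category says that such a map is monic exactly when its restriction $(0,\zeta_u)$ is---and $\zeta_u$ is monic by Proposition~\ref{centr}. Your argument instead stays at the abstract level of the idempotent comonad: you form the kernel pair $R\rightrightarrows GB$, observe $R\rightarrowtail GB\times_Y GB\in\mathcal F$ so $R\in\mathcal F$ by saturation, and conclude $p_0=p_1$ from the coreflection universal property. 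This is a genuinely different use of strong unitality (saturation on subobjects rather than the mono-recognition lemma from \cite{BB}) and has the advantage of being self-contained and independent of the explicit cooperator description of $\varepsilon_Y$. The paper's approach, on the other hand, makes visible \emph{why} the counit is monic in concrete terms: it is essentially $Y\times\zeta_u$.

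For the pullback claim the two arguments coincide: the paper isolates your pullback-plus-saturation step as the separate Lemma~\ref{satur} (stated as an equivalence: $U$ saturated on subobjects $\Leftrightarrow$ the naturality square is a pullback for every $j$), and its proof of the relevant direction is exactly your construction of $P$ and the pair of mutually inverse comparisons $a,c$.
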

\proof
The comonad is idempotent even if $\mathbb C$ is only unital since the functor $\tau_Y^*$ is fully faithful. From the construction of $\Phi_Y[f,s]$, it is sufficient to prove the assertion for $Z[u]=\Phi_Y[p_Y,(1,u)]$. We observed that the natural map $\varepsilon_Y(p_Y,(1,u))$ is nothing but the map $Y\times Z[u] \stackrel{(p_Y,\phi)}{\rightarrow} Y\times X$, where the map $\phi$ is the cooperator of $u$ and $\zeta_u$. Now consider the following diagram in $\mathbb C$:
$$
\xymatrix@=15pt{
 Y \ar[r]^<<<<{\iota_Y}\ar@<-1ex>[rdd]_{(1,u)} & Y\times Z[u]\ar[dd]^<<<<{(p_Y,\phi)} & Z[u] \ar[l]_>>>>{\iota_{Z[u]}} \ar[ddl]^{(0,\zeta_u)}\\
 && \\
 & Y\times X  \ar[uul]_{p_Y}    }
$$
Then, according to Lemma 1.8.18 in \cite{BB} the vertical central map is a monomorphism if and only if so is the map $(0,\zeta_u)$, which is the case here. The end of the proof is a consequence of the following very general lemma:
\endproof

\begin{lemma}\label{satur}
Let $U: \mathbb E\rightarrow \mathbb F$ be a left exact fully faithful functor between finitely complete categories. Suppose moreover that $U$ has a right adjoint $G$ such that the natural transformation of the induced idempotent left exact comonad $\varepsilon: U.G\Rightarrow Id$ is monomorphic. Then the functor $U$ is saturated on subobjects if and only if, given any subobject $j:X' \rightarrowtail X$ in $\mathbb F$, the following square is a pullback in $\mathbb F$:
$$
\xymatrix@=20pt{
{U.G(X')\;} \ar@{>->}[r]^{U.G(j)} \ar[d]_{\varepsilon_{X'}} & {U.G(X)\;} \ar[d]^{\varepsilon_{X}} \\
{X'\;} \ar@{>->}[r]_{j} & X
             }
$$
\end{lemma}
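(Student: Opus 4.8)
The plan is to prove both implications by exploiting the standard description of an idempotent comonad. Since $U$ is fully faithful and $U.G$ is idempotent, the component $\varepsilon_A$ is an isomorphism precisely when $A$ lies, up to isomorphism, in the image of $U$, and for a general object $A$ the map $\varepsilon_A:U.G(A)\to A$ realizes $U.G(A)$ as the coreflection of $A$ into this image. Concretely, the adjunction $U\dashv G$ together with the full faithfulness of $U$ gives the bijection $\mathbb F(U(Z),A)\cong\mathbb F(U(Z),U.G(A))$ by composition with $\varepsilon_A$, so that every morphism $U(Z)\to A$ from an object of the image factors uniquely through $\varepsilon_A$. I would record this universal property at the outset, since it is the engine of both directions; I would also note that ``$U$ saturated on subobjects'' says exactly that every subobject in $\mathbb F$ of an object in the image of $U$ again belongs to the image of $U$.

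For the implication ``pullback $\Rightarrow$ saturated'', which is the quick one, I would start from a subobject $j:X'\rightarrowtail X$ with $X=U(Z)$. Since $X$ is in the image of $U$, idempotency gives that $\varepsilon_X$ is an isomorphism. Feeding $j$ into the assumed pullback square then exhibits $\varepsilon_{X'}$ as the pullback of the isomorphism $\varepsilon_X$ along $j$, so $\varepsilon_{X'}$ is itself an isomorphism; thus $X'$ lies in the image of $U$, and (using that $G$ preserves monomorphisms and that $G.U\cong Id$) it is, as a subobject of $X$, of the form $U(S)$ for a subobject $S\rightarrowtail Z$. This is saturation.

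For the converse ``saturated $\Rightarrow$ pullback'', I would take an arbitrary subobject $j:X'\rightarrowtail X$ and form the pullback $P=X'\times_X U.G(X)$ of $\varepsilon_X$ along $j$, with projections $p:P\to X'$ and $q:P\rightarrowtail U.G(X)$; here $q$ is a monomorphism, being a pullback of $j$. Since $U.G(X)=U(G(X))$ lies in the image of $U$, saturation forces $P$ to lie in the image of $U$ as well. I would then identify $(P,p)$ with the coreflection $\varepsilon_{X'}:U.G(X')\to X'$: given any map $f:U(Z)\to X'$ from the image of $U$, the composite $j.f$ factors uniquely through $\varepsilon_X$, and the resulting pair into the pullback $P$ is the unique factorization of $f$ through $p$, so $(P,p)$ enjoys the universal property characterizing $U.G(X')$. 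Checking that the comparison isomorphism is compatible with $q$ and $U.G(j)$ (both being the unique maps to $U.G(X)$ lying over $j$) then shows that the naturality square at $j$ coincides with the defining pullback square of $P$, and is therefore a pullback.

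The main obstacle is concentrated in this last identification: one must verify that $P$, once known to lie in the image of $U$ by saturation, satisfies exactly the universal property of $U.G(X')$, and that the induced comparison is the map appearing in the naturality square. Everything else is formal (pullback of an isomorphism is an isomorphism, pullback of a monomorphism is a monomorphism, idempotency making $\varepsilon_{U(Z)}$ invertible). I expect no serious difficulty, but care is needed to keep the factorizations over $X$ and over $X'$ straight, which is precisely where the hypotheses that $\varepsilon$ is monomorphic and that the comonad is left exact keep the relevant subobjects and limits well-behaved.
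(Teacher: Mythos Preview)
Your proposal is correct and follows essentially the same route as the paper: both directions use the pullback $P$ of $\varepsilon_X$ along $j$, observe that $P$ is a subobject of $U.G(X)$ and hence (by saturation) in the image of $U$, and then identify $P$ with $U.G(X')$. The only cosmetic difference is in this last identification: the paper writes down the canonical comparison $\eta:U.G(X')\rightarrowtail P$, produces its inverse $\bar\eta$ directly from the adjunction, and checks it is inverse using that $\varepsilon_{X'}$ and the pullback projection $\epsilon$ are monomorphisms; you instead verify that $(P,p)$ satisfies the universal property of the coreflection. These are interchangeable formulations of the same step.
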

\proof
Suppose the previous condition is satisfied. Given any subobject $j$, when $\varepsilon_{X}$ is an isomorphism, so is $\varepsilon_{X'}$, and $U$ is saturated on subobjects. Conversely suppose $U$ saturated on subobjects, and consider the pullback of $\varepsilon_{X}$ along $j$ and the induced monomorphic factorization $\eta$:
$$
\xymatrix@=20pt{
{U.G(X')\;} \ar@{>->}[rrd]^{U.G(j)} \ar[ddr]_{\varepsilon_{X'}} \ar@{>->}[dr]_{\eta}\\
& {P\;} \ar@{>->}[r]_{\bar j} \ar[d]_{\epsilon} & {U.G(X)\;} \ar[d]^{\varepsilon_{X}} \\
& {X'\;} \ar@{>->}[r]_{j} & X
             }
$$
Since $U$ is saturated on subobjects we can choose an object $P=U(T)$. Accordingly there is a map $\bar{\eta}:U(T)=P\rightarrow U.G(X')$ such that $\varepsilon_{X'}.\bar{\eta}=\epsilon$. Since $\varepsilon_{X'}$ and $\epsilon$ are monomorphisms, this $\bar{\eta}$ is the inverse of $\eta$. Therefore the square in question is a pullback.  
\endproof

\section{Mal'cev context}

We shall work now in the algebraic context of Mal'cev categories in the sense of \cite{CLP} and \cite{CPP}. One way of saying that a category $\mathbb C$ is a Mal'cev category is to say that any pointed fibre $Pt_Y\mathbb C$ is unital or, equivalently that any pointed fibre $Pt_Y\mathbb C$ is strongly unital, see \cite{B0}. Let us introduce the following:

\begin{defi}
A finitely complete category $\mathbb C$ is said to be fiberwise algebraically cartesian closed (f.a.c.c.) when every fibre $Pt_Y\mathbb C$ is algebraically cartesian closed. A morphism $h:X\rightarrow Y$ in $\mathbb C$ is said to be algebraically exponentiable when the change of base functor $h^*:Pt_Y{\mathbb C}\rightarrow Pt_X{\mathbb C}$ along $h$ admits a right adjoint. The category $\mathbb C$ is said to be locally algebraically cartesian closed (l.a.c.c) when any morphism $h$ is algebraically exponentiable.
\end{defi}

So, a category $\mathbb C$ is fiberwise algebraically cartesian closed if and only if, given any split epimorphism $(f,s):X\rightleftarrows Y$ the change of base functor $f^*:Pt_Y{\mathbb C}\rightarrow Pt_X{\mathbb E}$ has a right adjoint $\Phi_f$. When the category  $\mathbb C$ is a regular Malcev category, it is equivalent, according to Proposition \ref{centreg}, to saying that \emph{in any fibre $Pt_Y\mathbb C$ there exists centralizers of subobjects}. We get immediately:

\begin{prop}\label{mono2}
Suppose $\mathbb C$ is an fiberwise algebraically cartesian closed Mal'cev category. Let $(f,s):X\rightleftarrows Y$ be any split epimorphism in $\mathbb C$. Then the change of base functor $f^*:Pt_Y{\mathbb C}\rightarrow Pt_X{\mathbb E}$ is such that the natural transformation $\varepsilon_f: f^*.\Phi_{f}\Rightarrow Id$ of the induced idempotent left exact comonad is monomorphic. Moreover any subobject $j :\bullet \rightarrowtail \bullet$ in $Pt_X\mathbb C$ produces a pullback in $Pt_X\mathbb C$:
$$
\xymatrix@=20pt{
{f^*.\Phi_f(\bullet)\;} \ar@{>->}[r]^{f^*.\Phi_f(j)} \ar[d]_{\varepsilon_f\bullet} & {f^*.\Phi_f(\bullet)\;} \ar[d]^{\varepsilon_f\bullet} \\
{\bullet\;} \ar@{>->}[r]_{j} & \bullet
             }
$$
\end{prop}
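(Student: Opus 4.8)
The plan is to reduce Proposition \ref{mono2} to the already-established Proposition \ref{mono} by exploiting the fact that, in a Mal'cev category, each fibre $Pt_Y\mathbb C$ is strongly unital. Indeed, the statement of Proposition \ref{mono} concerns a strongly unital category $\mathbb C$ and the functor $\tau_Y^*:Pt_1\mathbb C\to Pt_Y\mathbb C$ together with its right adjoint. The key observation I would make is that the change-of-base functor $f^*:Pt_Y\mathbb C\to Pt_X\mathbb C$ along a split epimorphism $(f,s):X\rightleftarrows Y$ is itself an instance of a terminal-object change-of-base functor, but computed \emph{internally} to an appropriate fibre.

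To make this precise, I would first recall that since $\mathbb C$ is a Mal'cev category, the fibre $\mathbb D:=Pt_Y\mathbb C$ is a strongly unital category. The split epimorphism $(f,s)$, viewed as an object living over $Y$, determines a ``point'' in $\mathbb D$, and the category $Pt_X\mathbb C$ can be identified with the fibre $Pt_{(f,s)}\mathbb D$ of points in $\mathbb D$ above this object, with $f^*$ corresponding precisely to the functor $\tau^*$ for the terminal map in $\mathbb D$ down to $(f,s)$. Under this identification, the comonad $f^*.\Phi_f$ on $Pt_X\mathbb C$ matches the comonad $\tau^*.\Phi$ of Proposition \ref{mono} applied within the strongly unital category $\mathbb D$. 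I would then simply invoke Proposition \ref{mono}: its hypotheses (strong unitality of $\mathbb D$, existence of the right adjoint $\Phi_f$ which is guaranteed by the f.a.c.c.\ assumption) are met, so its two conclusions—that $\varepsilon_f$ is monomorphic and that every subobject $j$ in $Pt_X\mathbb C$ yields the asserted pullback square—transfer verbatim.

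\emph{The main obstacle} I anticipate is justifying the identification $Pt_X\mathbb C\simeq Pt_{(f,s)}(Pt_Y\mathbb C)$ and, crucially, checking that $f^*$ corresponds under this equivalence to the ``terminal change of base'' functor inside $\mathbb D=Pt_Y\mathbb C$ whose right adjoint Proposition \ref{mono} governs. This is a standard but delicate compatibility between iterated fibrations of points; one must verify that the base-change along $f$ and the internal terminal base-change agree, and that the right adjoints $\Phi_f$ and the internal $\Phi$ coincide. Once this bookkeeping is done, the hard analytic content—namely the monomorphism property via Lemma 1.8.18 of \cite{BB} and the pullback property via Lemma \ref{satur}—is already packaged inside Proposition \ref{mono} and needs no repetition. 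I would therefore present the proof as a short reduction, emphasizing that the strong unitality of every fibre is exactly the hypothesis that makes Proposition \ref{mono} directly applicable fibrewise.
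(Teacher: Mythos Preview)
Your proposal is correct and follows essentially the same approach as the paper: the paper's proof is the one-line observation that since $\mathbb C$ is Mal'cev every fibre $Pt_Y\mathbb C$ is strongly unital, and then one applies Proposition \ref{mono}. The identification $Pt_X\mathbb C\simeq Pt_{(f,s)}(Pt_Y\mathbb C)$ and the matching of $f^*$ with the internal terminal change of base that you flag as an obstacle is taken for granted in the paper (and indeed appears explicitly later, in the proof of Proposition \ref{pulst}), so your caution is warranted but the bookkeeping is routine.
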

\proof
As we recalled above, the category $\mathbb C$ being Mal'cev, any fibre $Pt_Y\mathbb C$ is not only unital but also strongly unital. Accordingly, just apply Proposition \ref{mono}.
\endproof  

\subsection{Examples} 
In \cite{Gr} and\cite{Gr1}, it was shown that: the category $CRg$ of commutative rings is fiberwise algebraically cartesian closed but not locally algebraically cartesian closed; the categories $Gp$ of groups and $R$-$Lie$ of Lie $R$-algebras, for any commmutative ring $R$, are locally algebraically cartesian closed; when a category $\mathbb E$ is a cartesian closed category with pullbacks, the category $Gp\mathbb E$ of internal groups in $\mathbb E$ is locally algebraically cartesian closed. On the other hand a category $\mathbb A$ was defined as essentially affine \cite{B-1} when any change of base functor $h^*:Pt_Y{\mathbb A}\rightarrow Pt_X{\mathbb A}$ is an equivalence of categories; accordingly any essentially affine category is locally algebraically cartesian closed. In particular any additive category is locally algebraically cartesian closed.

\medskip

\noindent\textbf{Non-pointed examples 1: slice and coslice categories}
 
\begin{lemma}
Let $U:\mathbb C\rightarrow \mathbb D$ be a pullback preserving functor. Suppose moreover that it is a discrete fibration (resp. discrete cofibration). When $f:X\rightarrow Y$ is algebraically exponentiable in $\mathbb C$, so is $U(f)$ in $\mathbb D$. 
\end{lemma}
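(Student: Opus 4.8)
The plan is to show that a discrete fibration $U$ (the discrete cofibration case being entirely dual) induces, for every object $Y$ of $\mathbb C$, an \emph{isomorphism} of categories $U_Y:Pt_Y\mathbb C\to Pt_{U(Y)}\mathbb D$, and that the preservation of pullbacks makes these isomorphisms commute with the change of base functors. The right adjoint to $U(f)^*$ is then obtained simply by transporting $\Phi_f$ along these isomorphisms.

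First I would construct the comparison functor $U_Y$, sending a point $(p,s)$ over $Y$ to $(U(p),U(s))$; this is a point over $U(Y)$ since the equation $ps=1_Y$ is preserved by any functor. To see that $U_Y$ is bijective on objects I use the unique lifting property: given a point $(q:F\rightleftarrows U(Y),t)$ in $\mathbb D$, the arrow $q$ (landing in $U(Y)$) lifts to a unique $p:E\to Y$ with $U(p)=q$, and then $t:U(Y)\to U(E)$ lifts to a unique $\tilde t:E'\to E$. The crucial observation is that $p\tilde t$ is a lift of $1_{U(Y)}$ with codomain $Y$, so by uniqueness of lifts it must coincide with $1_Y$; this forces $E'=Y$ and exhibits the splitting $s=\tilde t$ with $ps=1_Y$. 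Faithfulness of $U_Y$ follows from the faithfulness of any discrete fibration, while fullness is obtained by the same lift-and-uniqueness argument applied to a morphism of points $\beta$ over $U(Y)$: its unique lift automatically commutes with the projections and the splittings. Hence $U_Y$ is an isomorphism of categories.

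Next I would record the compatibility square. Since change of base is computed by pullback and $U$ preserves pullbacks, applying $U$ to the pullback of a point along $f$ produces precisely the pullback of the image point along $U(f)$; this yields a canonical natural isomorphism $U_X\circ f^*\cong U(f)^*\circ U_Y$, that is $U(f)^*\cong U_X\circ f^*\circ U_Y^{-1}$. As $f$ is algebraically exponentiable, $f^*$ has a right adjoint $\Phi_f$; since isomorphisms of categories are adjoint to their inverses, the composite $U_Y\circ\Phi_f\circ U_X^{-1}$ is then right adjoint to $U(f)^*$, so $U(f)$ is algebraically exponentiable in $\mathbb D$.

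I expect the main obstacle to be the verification that $U_Y$ is bijective on objects and full, and more precisely the repeated use of the uniqueness of lifts to identify an object lying over $U(Y)$ (resp.\ over $U(E)$) with $Y$ (resp.\ $E$) itself; once this identification trick is in place the remainder is formal. The discrete cofibration case is dealt with by the evident dualization: one lifts the splitting $s$ first (out of $Y$) and then the projection $p$, the same uniqueness argument forcing the relevant lift to land on $Y$.
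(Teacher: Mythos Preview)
Your argument is correct: the discrete fibration property gives precisely the unique lifting you need to show that each $U_Y:Pt_Y\mathbb C\to Pt_{U(Y)}\mathbb D$ is an isomorphism of categories, and pullback preservation then yields the compatibility $U_X\circ f^*\cong U(f)^*\circ U_Y$, from which the right adjoint to $U(f)^*$ is obtained by transport. The paper's own proof consists of the single word ``Straightforward'', so your write-up is simply a careful unfolding of the argument the authors leave implicit, and the approach is the same.
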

\proof Straightforward. \endproof

Let $\mathbb C$ be any category. Then, for any object $Y$ in $\mathbb C$, the domain functor $\mathbb C/Y\rightarrow \mathbb C$ is a discrete fibration which preserves pullbacks. Still, for any object $Y$ in $\mathbb C$, the codomain functor $Y/\mathbb C\rightarrow \mathbb C$ is a left exact discrete cofibration. Accordingly fiberwise algebraic cartesian closedness (resp. locally algebraic cartesian closedness) is stable under slicing and coslicing, giving rise to non-pointed examples. As a consequence, when $\mathbb C$ is fiberwise algebraically cartesian closed (resp. locally algebraically cartesian closed), so is any fibre $Pt_Y\mathbb C$, which is the coslice category on the terminal object of the slice category $\mathbb C/Y$.

\smallskip

\noindent\textbf{Non-pointed examples 2: the fibres of the fibration $Grd\rightarrow Set$} 

Let us denote by $Grd$ the category of groupoids and by $(\,)_0:Grd\rightarrow Set$ the forgetful functor associating with any groupoid $\underline Y_1$ the set $Y_0$ of its objects; it is a fibration whose cartesian maps in $Grd$ are the fully faithful functors. The fibre above $1$ is clearly the pointed category $Gp$ of groups. We shall denote by $Grd_X$ the fibre above the set $X$: its objects are the groupoids whose set of objects is $X$ and its arrows are those functors between such groupoids which are bijective on objects. We know that these fibres $Grd_X$ are protomodular \cite{B-1} and thus Mal'cev categories, and they are no longer pointed. The aim of this section is to show that any fibre $Grd_X$ is locally algebraically cartesian closed; the proof will be a slight generalization of the proof for $Gp$.

\begin{lemma}
Let be given a groupoid $\underline Y_1$. The fibre $Pt_{\underline Y_1}(Grd_{Y_0})$ is in bijection with the functor category $\mathcal F(\underline Y_1,Gp)$. Suppose $\underline F_1:\underline Y_1\rightarrow \underline Z_1$ be any functor. Then the change of base functor $\underline F_1^*:Pt_{\underline Z_1}(Grd_{Z_0})\rightarrow Pt_{\underline Y_1}(Grd_{Y_0})$ is naturally isomorphic to the functor $\mathcal F(\underline F_1,Gp): \mathcal F(\underline Z_1,Gp)\rightarrow \mathcal F(\underline Y_1,Gp)$.
\end{lemma}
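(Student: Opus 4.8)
The plan is to exhibit an explicit isomorphism of categories between $Pt_{\underline Y_1}(Grd_{Y_0})$ and $\mathcal F(\underline Y_1,Gp)$, and then to compute how pullback along $\underline F_1$ is transported across it, recognising it as precomposition with $\underline F_1$. The whole argument is a fibrewise elaboration of the classical case $\underline Y_1=Y$ (a one--object groupoid), where $Pt_Y(Gp)$ is the category of $Y$--groups via the semidirect product.

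I would first describe the two mutually inverse constructions. An object of $Pt_{\underline Y_1}(Grd_{Y_0})$ is a split epimorphism $(p,s):\underline X_1\rightleftarrows \underline Y_1$ inside $Grd_{Y_0}$, so that $\underline X_1$ has object set $Y_0$ and both $p$ and $s$ are the identity on objects. To such a point I associate the functor $G:\underline Y_1\rightarrow Gp$ sending an object $a$ to the group $G(a)=\{g\colon a\to a \text{ in } \underline X_1 \mid p(g)=1_a\}$ (the kernel of $p$ on the vertex group at $a$) and sending an arrow $f\colon a\to b$ to the conjugation $g\mapsto s(f)\,g\,s(f)^{-1}$; functoriality follows at once from $s$ being a functor. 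Conversely, to a functor $G:\underline Y_1\rightarrow Gp$ I associate the ``semidirect'' groupoid $\underline Y_1\ltimes G$ with object set $Y_0$, whose arrows $a\to b$ are the pairs $(f,g)$ with $f\colon a\to b$ in $\underline Y_1$ and $g\in G(a)$, composition being twisted by the action of $G$, together with the projection $(f,g)\mapsto f$ and section $f\mapsto (f,1)$. The computation making these inverse is that every arrow $h\colon a\to b$ of $\underline X_1$ factorises uniquely as $h=s(p(h))\cdot g$ with $g\in G(a)$; this gives a bijection on arrows, and one checks directly that morphisms of points (functors over $\underline Y_1$, identity on objects, compatible with the sections) correspond exactly to natural transformations between the associated functors. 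Hence the two constructions are mutually inverse and yield an isomorphism of categories.

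Next I would pin down the change of base. The functor $\underline F_1:\underline Y_1\rightarrow \underline Z_1$ lies, via the fibration $(\,)_0$, over a map $F_0:Y_0\rightarrow Z_0$, and $\underline F_1^*$ is pullback along $\underline F_1$ in $Grd$. Given a point $\underline W_1$ over $\underline Z_1$ in $Grd_{Z_0}$, the pullback $\underline Y_1\times_{\underline Z_1}\underline W_1$ has as objects the pairs $(a,w)$ with $a\in Y_0$, $w\in Z_0$ and $F_0(a)=w$, which are in bijection with $Y_0$; moreover the projection to $\underline Y_1$ is the identity on objects. Thus the pullback is again a point inside $Grd_{Y_0}$, confirming that $\underline F_1^*$ indeed restricts to a functor $Pt_{\underline Z_1}(Grd_{Z_0})\rightarrow Pt_{\underline Y_1}(Grd_{Y_0})$.

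Finally I would transport the pullback across the isomorphism of the first paragraph. Starting from $G:\underline Z_1\rightarrow Gp$, i.e. from the point $\underline Z_1\ltimes G$, an arrow $a\to b$ of the pullback is a compatible pair consisting of an arrow $\phi\colon a\to b$ in $\underline Y_1$ together with an arrow of $\underline Z_1\ltimes G$ over $\underline F_1(\phi)$, that is a pair $(\phi,g)$ with $g\in G(F_0 a)=(G\circ\underline F_1)(a)$, the twisted composition being inherited unchanged. This is precisely the groupoid $\underline Y_1\ltimes (G\circ\underline F_1)$, so under the isomorphism $\underline F_1^*$ becomes $G\mapsto G\circ\underline F_1=\mathcal F(\underline F_1,Gp)(G)$, naturally in $G$. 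I expect the main obstacle to be bookkeeping rather than conceptual: one must fix the variance conventions so that the twisted composition and the conjugation action match, and check compatibility with the chosen cartesian lift in the pullback and with the sections, so that the identification is a genuine natural isomorphism of functors.
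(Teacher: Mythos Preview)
Your proposal is correct and follows essentially the same route as the paper: both identify points over $\underline Y_1$ with $Gp$-valued functors via the Grothendieck/semidirect construction, and both read off that pullback along $\underline F_1$ becomes precomposition. The only difference is a harmless variance convention: the paper records an arrow of the total groupoid as a pair $(f,\gamma)$ with $\gamma\in H(\text{codomain of }f)$ (the split \emph{cofibration} form, with composition $(f',\gamma')\cdot(f,\gamma)=(f'f,\gamma'\cdot H(f')(\gamma))$), whereas you take $g\in G(\text{domain of }f)$ via the factorisation $h=s(p(h))\cdot g$; since $\underline Y_1$ is a groupoid both normalisations are available and interchangeable, so nothing is lost either way.
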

\proof
The category $Gp$ can be considered as the full subcategory of the category $Cat$ (of categories) whose objects are the groupoids with only one object. The lemma is a specification of the Grothendieck construction. From any functor $H:\underline Y_1 \rightarrow Gp$ we get a bijective on objects split cofibration $\underline H_1:\underline X_1\rightarrow \underline Y_1$ where a map $y\rightarrow y'$ in $\underline X_1$ is a pair $(f,\gamma)$ with $f:y\rightarrow y'$ is a map in $\underline Y_1$ and $\gamma \in H(y')$. The composition is defined by: $(f',\gamma ').(f,\gamma) =(f'.f,\gamma'.H(f')(\gamma))$. The functor $\underline H_1$, defined by $\underline H_1(f,\gamma)=f$, has a splitting $\underline T_1$ defined by $\underline T_1(f)=(f,1_{H(y')})$. Conversely any split bijective on objects functor $\underline H_1:\underline X_1\rightarrow \underline Y_1$ is necessarily a split cofibration and determines a functor $H:\underline Y_1 \rightarrow Gp$. The end of the proof is straightforward.
\endproof

\begin{theo}
Consider the fibration $(\,)_0:Grd\rightarrow Set$; any of its fibres $Grd_X$ is locally algebraically cartesian closed.
\end{theo}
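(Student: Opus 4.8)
The plan is to reduce the statement, via the preceding lemma, to a standard right Kan extension argument, exactly paralleling the case of $Gp$ (the fibre $Grd_1$) which the text anticipates.

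First I would fix an arbitrary morphism $\underline F_1:\underline Y_1\rightarrow \underline Z_1$ of the fibre $Grd_X$; by definition it is a functor, bijective (indeed identity) on objects, between two groupoids with common object set $X=Y_0=Z_0$. To show that $\underline F_1$ is algebraically exponentiable I must exhibit a right adjoint to the change of base functor $\underline F_1^*:Pt_{\underline Z_1}(Grd_X)\rightarrow Pt_{\underline Y_1}(Grd_X)$. By the preceding lemma this functor is naturally isomorphic to the restriction (precomposition) functor $\mathcal F(\underline F_1,Gp):\mathcal F(\underline Z_1,Gp)\rightarrow \mathcal F(\underline Y_1,Gp)$, $H\mapsto H\circ \underline F_1$. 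Since a right adjoint transports along a natural isomorphism, it thus suffices to produce a right adjoint to this restriction functor.

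Next I would invoke the right Kan extension along $\underline F_1$. The groupoids $\underline Y_1,\underline Z_1$ are small (their object set $X$ is a genuine set and they have small hom-sets), and the category $Gp$, being a variety of algebras, is complete; hence the pointwise right Kan extension $\mathrm{Ran}_{\underline F_1}:\mathcal F(\underline Y_1,Gp)\rightarrow \mathcal F(\underline Z_1,Gp)$ exists. Concretely, for $G:\underline Y_1\rightarrow Gp$ and an object $z$ of $X$ one sets $(\mathrm{Ran}_{\underline F_1}G)(z)=\lim G\pi$, the limit running over the small comma category $z\downarrow \underline F_1$ whose objects are the pairs $(y,\xi:z\rightarrow \underline F_1(y))$ and $\pi$ being the projection to $\underline Y_1$; this limit exists in $Gp$ by completeness. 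The defining universal property of the pointwise right Kan extension is exactly the adjunction $\mathcal F(\underline F_1,Gp)\dashv \mathrm{Ran}_{\underline F_1}$.

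Combining the two reductions, $\underline F_1^*$ acquires a right adjoint, so every morphism $\underline F_1$ of $Grd_X$ is algebraically exponentiable, which is the assertion that $Grd_X$ is locally algebraically cartesian closed. I expect the only delicate point---the ``slight generalization'' over $Gp$---to be the bookkeeping around the Kan extension: for $Gp=Grd_1$ the groupoids have a single object and $\mathrm{Ran}_{\underline F_1}G$ is the familiar coinduced action, whereas over an arbitrary object set $X$ one must check that the comma categories $z\downarrow \underline F_1$ remain small so that the defining limits genuinely live in $Gp$. As $X$ is a set this smallness is automatic, and the $Gp$ argument then carries over essentially verbatim.
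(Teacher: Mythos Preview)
Your proposal is correct and follows essentially the same route as the paper: reduce, via the preceding lemma, the change of base functor $\underline F_1^*$ to the precomposition functor $\mathcal F(\underline F_1,Gp)$, and then supply its right adjoint by right Kan extension along $\underline F_1$. Your version merely adds the expected details (pointwise Kan formula, smallness of the comma categories, completeness of $Gp$) that the paper leaves implicit.
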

\proof
Given any functor $\underline F_1:\underline Y_1\rightarrow \underline Z_1$ between two groupoids, the functor $\mathcal F(\underline F_1,Gp): \mathcal F(\underline Z_1,Gp)\rightarrow \mathcal F(\underline Y_1,Gp)$ admits a right adjoint, given by the right Kan extension along the functor $\underline F_1$. Then, according to the previous lemma, any change of base functor $\underline F_1^*:Pt_{\underline Z_1}(Grd_{Z_0})\rightarrow Pt_{\underline Y_1}(Grd_{Y_0})$ has a right adjoint. The theorem holds by taking $\underline F_1$ a bijective on objects functor with $Z_0=X=Y_0$.
\endproof

\smallskip

\noindent\textbf{Topological models}

In this section we shall make explicit some topological examples. Let $\mathbb T$ be a Mal'cev theory, $\mathbb V(\mathbb T)$ the corresponding variety of $\mathbb T$-algebras and $Top(\mathbb T)$ the category of topological $\mathbb T$-algebras. Recall that $Top(\mathbb T)$ is then a regular Mal'cev category, see\cite{JP}, whose regular epimorphisms are the open surjective maps and recall also the following:
\begin{lemma}
Let $\mathbb T$ be a Mal'cev theory. Then the forgetful left exact functor $U:Top(\mathbb T)\rightarrow \mathbb V(\mathbb T)$ reflects the pullback of split epimorphisms along regular epimorphisms.
\end{lemma}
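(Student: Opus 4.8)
The plan is to reduce the statement to the claim that a single canonical continuous bijective homomorphism is a homeomorphism. Write the situation under consideration as the pullback of a split epimorphism $f:X\to Y$, with section $s$, along a regular epimorphism $g:A\to Y$, and form the pullback $\bar P=A\times_Y X$ in $Top(\mathbb T)$, equipped with its subspace topology and projections $\bar p:\bar P\to A$ and $\bar q:\bar P\to X$. Since $U$ is left exact it preserves this pullback, so $U(\bar P)$ is the pullback of $U(f)$ along $U(g)$ in $\mathbb V(\mathbb T)$. Because the image under $U$ of the square in question is assumed to be this pullback, the induced comparison $\theta$ into $\bar P$ is sent by $U$ to an isomorphism, hence $\theta$ is a continuous bijective homomorphism of topological $\mathbb T$-algebras. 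Everything then reduces to showing that $\theta$ is open, i.e. that the pulled-back object carries the subspace topology and no strictly finer one.

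First I would record the two structural inputs. On the one hand, the section $s$ of $f$ yields a continuous section $\hat s=\langle 1_A, sg\rangle:A\to\bar P$ of $\bar p$ (the defining condition $g(a)=f(sg(a))$ holds since $fs=1_Y$), so that $\bar p$ is a split epimorphism; combining $\hat s$ with the continuous ternary Mal'cev term $m$ carried by the topological algebra $\bar P$ (available precisely because $\mathbb T$ is a Mal'cev theory) lets me write an arbitrary element of $\bar P$ in a continuous normal form, as an $m$-combination of a point in the image of $\hat s$ and a point in the fibre direction of $f$. On the other hand, since $Top(\mathbb T)$ is a regular category whose regular epimorphisms are exactly the open surjections, this class is stable under pullback; hence $\bar q$, being the pullback of the open surjection $g$ along $f$, is again an open surjection.

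The heart of the argument is then to upgrade the continuous bijection $\theta$ to an open map. For a basic open set of the domain I would push its two coordinates forward separately: the $X$-coordinate is controlled because $\bar q$, as the pullback of the open map $g$, sends opens to opens, while the $A$-coordinate is controlled through the continuous section $\hat s$; the continuous Mal'cev term $m$ then recombines these two open pieces into an open subset of $\bar P$. This is exactly the step where the hypothesis that $g$ is a \emph{regular} epimorphism, i.e. an \emph{open} surjection, is indispensable: openness is what converts the purely algebraic bijectivity of $\theta$ into a genuine topological isomorphism, and it survives the base change along $f$ only because we are working in the regular category $Top(\mathbb T)$.

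I expect the main obstacle to be precisely this topological identification. Set-theoretically and algebraically there is nothing left to prove, since $U$ already forces $\theta$ to be a bijective homomorphism; all the content lies in the continuity of $\theta^{-1}$, i.e. in the openness of $\theta$, and the only leverage available is the open-quotient nature of the regular epimorphism $g$ together with the continuous splitting furnished by $s$. Once openness of $\theta$ is secured, $\theta$ is a homeomorphism and the given square is a pullback in $Top(\mathbb T)$, which is exactly what the reflection statement asserts.
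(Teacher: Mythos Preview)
The paper does not give a proof of this lemma: it is introduced with ``recall also the following'' and is presumably quoted from \cite{JP}. So there is no argument in the paper to compare yours against; I can only assess your sketch on its own merits.

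Your reduction to showing that the comparison $\theta:P\to\bar P$ is a homeomorphism is correct, and so is the observation that $\bar q:\bar P\to X$ is an open surjection and that $\hat s:A\to\bar P$ is a continuous section of $\bar p$. The gap is that every tool you list --- $\hat s$, $\bar q$, the Mal'cev term on $\bar P$ --- lives on the \emph{actual} pullback $\bar P$, while the map whose openness you must establish has domain $P$. Your paragraph ``push the two coordinates forward separately and recombine with $m$'' never produces a continuous map \emph{into} $P$, so it cannot constrain the topology of $P$.

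In fact, under your reading of the hypotheses (only $(f,s)$ split, only $g$ regular, $p$ and $q$ merely continuous), the statement is false. Take $\mathbb T$ the theory of groups, $X=Y=1$, $f=s=\mathrm{id}$, $A$ any nontrivial topological group with $g:A\to 1$, and let $P$ be the same underlying group as $A$ equipped with a strictly finer group topology, with $p:P\to A$ the identity on underlying groups. Then $U$ sends the square to a pullback, but the square is not a pullback in $Top(\mathbb T)$, since the genuine pullback is $A$ with its original topology. Your toolkit cannot distinguish $P$ from $\bar P=A$ here, which pinpoints exactly where the sketch breaks.

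What is intended (and what is actually used in the subsequent proposition, where the left vertical $V'\to X$ comes with the continuous section $\bar\beta$) is that the square be a \emph{morphism of split epimorphisms}: one also has a continuous section $t:A\to P$ of $p$ with $qt=sg$. This $t$ is the missing ingredient: it lets you write a continuous map such as
\[
A\times_Y P\longrightarrow P,\qquad (a,\pi)\longmapsto m_P\bigl(t(a),\,t(p\pi),\,\pi\bigr),
\]
whose composite with $\theta$ is $(a,\pi)\mapsto(a,q\pi)$, and it is from this (together with the openness of the pullback of $g$) that one can force $\theta^{-1}$ to be continuous. Without $t$ there is simply no continuous way back into $P$, and the counterexample above shows the obstruction is genuine.
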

From this, we get:
\begin{prop}
Let $\mathbb T$ be a Mal'cev theory such that $\mathbb V(\mathbb T)$ is fiberwise algebraically cartesian closed. Then the category $Top(\mathbb T)$ of topological $\mathbb T$-algebras is fiberwise algebraically cartesian closed.
\end{prop}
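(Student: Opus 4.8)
The plan is to transport, split epimorphism by split epimorphism, the algebraic right adjoints of $\mathbb V(\mathbb T)$ up to $Top(\mathbb T)$ along the forgetful functor $U\colon Top(\mathbb T)\to\mathbb V(\mathbb T)$, with the preceding Lemma as the decisive tool. By the reformulation of fiberwise algebraic cartesian closedness recorded just after its definition, it suffices to produce, for every split epimorphism $(f,s)\colon X\rightleftarrows Y$ of $Top(\mathbb T)$, a right adjoint $\Phi_f$ to the change-of-base functor $f^*\colon Pt_Y Top(\mathbb T)\to Pt_X Top(\mathbb T)$. I would open with three structural observations. First, $U$ is left exact, hence preserves the pullbacks defining change of base, so that $U$ commutes with $f^*$ and the right adjoint $\Phi_{Uf}$ to $(Uf)^*$ is available downstairs by the hypothesis on $\mathbb V(\mathbb T)$. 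Second, a split epimorphism is always an (absolute) regular epimorphism, so $f$ is in particular an open surjection; this is what brings the Lemma into range, since $f^*$ is computed by pulling the split epimorphism $Q\to Y$ back along the regular epimorphism $f$. Third, by Proposition \ref{mono2} applied to the f.a.c.c. Mal'cev variety $\mathbb V(\mathbb T)$, the counit $\varepsilon_{Uf}\colon (Uf)^*\Phi_{Uf}\Rightarrow Id$ is monomorphic.

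Given a point $P$ over $X$, I would take the underlying $\mathbb T$-algebra of $\Phi_f(P)$ to be $\Phi_{Uf}(UP)$, a point over $UY$, and topologize it as a subspace. Indeed, the monomorphic counit realizes $(Uf)^*\Phi_{Uf}(UP)$ as a subalgebra of the topological algebra $P$, which I equip with the subspace topology; the continuous section $s$ then exhibits $\Phi_{Uf}(UP)$ as a split retract of this pullback, and I give it the initial topology along the corresponding split monomorphism into $P$, turning it into a topological $\mathbb T$-algebra $\Phi_f(P)$, a point over $Y$. The underlying bijection $Pt_X Top(\mathbb T)(f^*Q,P)\cong Pt_Y Top(\mathbb T)(Q,\Phi_f P)$ is then nothing but the adjunction $(Uf)^*\dashv\Phi_{Uf}$ of $\mathbb V(\mathbb T)$, and the whole problem is to see that it restricts correctly to continuous maps on both sides.

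The compatibility with the topologies is the heart of the matter and the step I expect to be the main obstacle. The delicate point is the continuity of the counit $\varepsilon_f(P)\colon f^*\Phi_f(P)\to P$: the underlying algebra of $f^*\Phi_f(P)$ is $(Uf)^*\Phi_{Uf}(UP)$, but a priori its topological-pullback topology need not make the monomorphism into $P$ continuous. Here is exactly where I would invoke the Lemma. Assembling the continuous structure maps into a commutative square exhibiting $f^*\Phi_f(P)$ as the pullback of the split epimorphism $\Phi_f(P)\to Y$ along the regular epimorphism $f$, and applying the left exact $U$, one gets a genuine pullback in $\mathbb V(\mathbb T)$; the Lemma then reflects it to a pullback in $Top(\mathbb T)$, identifying $f^*\Phi_f(P)$ with the subalgebra $\mathrm{im}(\varepsilon_{Uf})$ of $P$ carrying the subspace topology, for which the counit is precisely the continuous inclusion. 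Once the counit is continuous, the universal property transposes continuously: for a continuous $h\colon f^*Q\to P$ its algebraic transpose $\overline h\colon Q\to\Phi_f(P)$ factors, through the section $s$ and the subspace topology just described, as a composite of continuous maps, so $\overline h$ is continuous. This upgrades the purely algebraic adjunction of $\mathbb V(\mathbb T)$ to an adjunction $f^*\dashv\Phi_f$ in $Top(\mathbb T)$, whence $Top(\mathbb T)$ is fiberwise algebraically cartesian closed.
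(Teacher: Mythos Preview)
Your overall plan matches the paper's: lift the algebraic right adjoint by topologizing $W=\Phi_{Uf}(UP)$ and use the Lemma to control the square over $f$. The gap is in your choice of topology on $W$. You equip $W$ with the \emph{initial} topology along the split mono $\bar\sigma\colon W\to V'\hookrightarrow P$; this makes $\bar\sigma$ continuous, but not, a priori, the retraction $\bar\phi\colon V'\to W$. Your invocation of the Lemma then becomes empty: the square you describe, with $f^*\Phi_f(P)$ at the top-left, is a pullback in $Top(\mathbb T)$ \emph{by definition} of $f^*$, so reflecting it along $U$ tells you nothing new. What you actually need is that the square with $V'$ (the subspace of $P$) at the top-left is a pullback, since that is what would identify the pullback topology on $f^*W$ with the subspace topology and make the counit the continuous inclusion. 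But the Lemma only applies to a commuting square of \emph{continuous} maps, and with the subspace topology on $W$ you have no reason for $\bar\phi\colon V'\to W$ to be continuous.

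The paper resolves this by putting the \emph{quotient} topology on $W$ along the split epi $\bar\phi$. Then $\bar\phi$ is continuous by construction, and so are $\bar\gamma\colon W\to Y$ (it factors through $\bar\phi$) and $\bar\tau\colon Y\to W$ (as $\bar\phi\circ\bar\beta\circ s$, using the continuous section $\bar\beta\colon X\to V'$ induced by $t$). The Lemma now applies to the square $(V',W,X,Y)$ and shows it is a pullback in $Top(\mathbb T)$; hence $V'=f^*W$, the counit is the continuous inclusion $V'\hookrightarrow P$, and $\bar\sigma$ is continuous via the pullback universal property. A posteriori the two topologies on $W$ coincide, since the continuous split mono $\bar\sigma$ is then an embedding; but this is an output of the argument, not an input you can use to set it up. Once the counit is continuous, your final paragraph on the continuity of the transpose $\bar h$ goes through unchanged.
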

\proof
Let $(f,s):X\rightleftarrows Y$ be a split epimorphism in $Top(\mathbb T)$ and $(g,t):V\rightleftarrows X$ an object in $Pt_XTop(\mathbb T)$. First consider the following diagram given by the algebraic exponentiation in $\mathbb V(\mathbb T)$:
$$
\xymatrix@=12pt{ 
 && {\; U(f)^*\Phi_{U(f)}[U(g),U(t)]\;} \ar@{>->}[lld]_{\varepsilon_{d_1}(U(g),U(t))} \ar@{->>}[rr]_{\phi} \ar[lldd]  && {\;\Phi_{U(f)}[U(g),U(t)]}\ar@<-1ex>@{>->}[ll]_{\sigma} \ar[lldd]_{\gamma} \\
U(V) \ar[d]^{U(g)} &&  \\
U(X) \ar@<1ex>[u]^{U(t)} \ar@{->>}[rr]^{U(f)} \ar@<-1ex>[uurr]_{\beta} && {\; U(Y)} \ar@<1ex>@{>->}[ll]^{U(s)} \ar@<-1ex>[uurr]_{\tau}  
 }
$$
and then the following one in $Top(\mathbb T)$ where $V'$ is the algebra $U(f)^*\Phi_{U(f)}[U(g),U(t)]$ equipped with the topology induced by the one on $V$:
$$
\xymatrix@=25pt{ 
 && {\; V'\;} \ar@{>->}[lld]_{\bar{\varepsilon}} \ar@{.>>}[rr]_{\bar{\phi}} \ar[lldd]  && {\;W}\ar@<-1ex>@{>.>}[ll]_{\bar{\sigma}} \ar@{.>}[lldd]_{\bar{\gamma}} \\
V \ar[d]^{g} &&  \\
X \ar@<1ex>[u]^{t} \ar@{->>}[rr]^{f} \ar@{.>}@<-1ex>[uurr]_{\bar{\beta}} && {\; Y} \ar@<1ex>@{>->}[ll]^{s} \ar@{.>}@<-1ex>[uurr]_{\bar{\tau}}  
 }
$$
The map $t$ in $Top(\mathbb T)$ and the factorization $\beta$ in $\mathbb V(\mathbb T)$ produce the factorization $\bar{\beta}$. Then put the quotient topology on $\Phi_{U(f)}[U(g),U(t)]$ to produce the object $W$ in $Top(\mathbb T)$. Then we get the dotted maps above the quadrangle pullback of our initial diagram. The previous lemma asserts that it is a pullback. From this situation, it is straigforward to check that the split epimorphism $(\bar{\gamma},\bar{\tau})$ has the desired universal property with respect to the change of base functor $f^*$.
\endproof

Accordingly the categories $TopGp$ and $TopCRg$ of topological groups and topological commutative rings are fiberwise algebraically cartesian closed.

\subsection{Abelian split extension}

A split epimorphism $(f,s):X \rightleftarrows Y$ is said to be abelian in a Mal'cev category $\mathbb C$ when it is an abelian object in the fibre $Pt_Y\mathbb C$. Since any right adjoint functor is left exact, any algebraically exponentiable map $h:Y\rightarrow Y'$ is such that the restriction of $\Phi_h:Pt_Y\mathbb C\rightarrow Pt_{Y'}\mathbb C$ to the abelian objects determines a functor:
$$\Phi_h:AbPt_Y\mathbb C\rightarrow AbPt_{Y'}\mathbb C$$
In particular, when $\mathbb C$ is pointed and fiberwise algebraically cartesian closed, when $(f,s)$ is abelian, so is the object $\Phi_Y[f,s]$. Recall that, when $\mathbb C$ is the category $Gp$ of groups, a split epimorphism is abelian if and only if it has an abelian kernel $A$:
$$
\xymatrix@=30pt{
1 \ar[r] & {A\;} \ar@{>->}[r] & Y\ltimes_{\psi} A \ar@{->>}[r]^{\pi} & {\;Y} \ar[r] \ar@<1ex>@{>->}[l]^{\sigma} & 1        }
$$
The (evidently abelian) subgroup $\Phi_Y[\pi,\sigma]$ of the invariant elements of $A$ under the action $\psi$ was denoted $A^Y$ in \cite{ML} and shown to be the $0$-dimensional cohomology group $H^0_{\psi}(Y,A)$. This fact was used to introduce in the Mal'cev context a notion of internal cohomology in \cite{Gr0}.

\subsection{Centralizer of equivalence relations}

In the Mal'cev context, there exits also an intrinsic notion of commutation at the level of equivalence relations, see \cite{BG2}. First, the subobjects of the object $(p_0,s_0):X\times X\rightleftarrows X$ in the fibre $Pt_X\mathbb C$ coincide exactly with the reflexive relations on $X$, hence, in the Mal'cev context, with the equivalence relations on $X$. Recall that two equivalence relations $R$ and $S$ on an object $X$ commute in $\mathbb C$ if and only if the two following subobjects in the fibre $Pt_X\mathbb C$ do commute in $Pt_X\mathbb C$, see Proposition 2.6.12 in \cite{BB}:
$$
\xymatrix@=30pt{
& & & {\;S} \ar[ddll]_{d_0} \ar[dll]_{(d_0,d_1)} \\
{R\;} \ar@{>->}[r]^{(d_1,d_0)}\ar@<-1ex>[rd]_{d_1} & X\times X \ar@<-1ex>[d]_{p_0} \\
 & X \ar[lu]_{s_0} \ar[u]_{s_0} \ar@<-1ex>[rruu]_{s_0}
              }
$$
the choice of this presentation ($R^{op}$ rather than $R$) being made for technical reasons related to the classical presentation of the axioms of a Mal'cev operation. So, in the fiberwise algebraically cartesian closed context, the existence of centralizers can be immediately transfered to the level of equivalence relations.
 
\begin{prop}\label{centrequ}
Suppose $\mathbb C$ is a Mal'cev category which is fiberwise algebraically cartesian closed. Let $R$ be any equivalence relation on the object $X$. Then the centralizer $Z(R)$ of the equivalence relation $R$ does exist in $\mathbb C$ and is given by the domain of $\Phi_{d_1}[p_R,(1,d_0)]$:
$$
\xymatrix@=20pt{ 
 && {\; \bullet\;} \ar@{>->}[lld]_{\varepsilon_{d_1}(p_R,(1,d_0))} \ar[rr] \ar[lldd] && {\;Z(R)}\ar@<-1ex>@{>->}[ll] \ar[lldd] \ar@{>->}[lld]_>>>>{(d_0,d_1)}\\
R\times X \ar[d]^{p_R} && {\; X\times X} \ar@{>->}[ll] \ar[d]^{p_0} \\
R \ar@<1ex>[u]^{(1,d_0)} \ar[rr]^{d_1} \ar@<-1ex>[uurr] && {\; X} \ar@<1ex>@{>->}[ll]^{s_0} \ar@<-1ex>[uurr] \ar@<1ex>[u]^{s_0} 
 }
$$
\end{prop}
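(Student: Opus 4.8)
The plan is to reduce the statement to the existence of a centralizer of a \emph{subobject} inside the fibre $Pt_X\mathbb C$, and then to identify the resulting abstract construction with the concrete formula $\Phi_{d_1}[p_R,(1,d_0)]$. First I would invoke the dictionary recalled just above: an equivalence relation $S$ on $X$ is exactly a subobject of the object $(p_0,s_0):X\times X\rightleftarrows X$ in the fibre $Pt_X\mathbb C$, and $R$ and $S$ commute as equivalence relations precisely when the corresponding subobjects $(d_1,d_0):R\rightarrowtail X\times X$ and $(d_0,d_1):S\rightarrowtail X\times X$ commute in $Pt_X\mathbb C$. Consequently the centralizer $Z(R)$, being the largest equivalence relation commuting with $R$, is nothing but the centralizer of the subobject $(d_1,d_0):R\rightarrowtail X\times X$ taken inside the unital category $Pt_X\mathbb C$.

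Next I would secure the existence of this centralizer. Since $\mathbb C$ is Mal'cev, the fibre $Pt_X\mathbb C$ is strongly unital; since $\mathbb C$ is fiberwise algebraically cartesian closed, the fibre $Pt_X\mathbb C$ is itself algebraically cartesian closed. Hence, applying Proposition~\ref{centr} \emph{inside} $Pt_X\mathbb C$, every subobject there admits a universal commuting monomorphism. In particular the universal monomorphism $\zeta$ commuting with $(d_1,d_0)$ exists; being by construction a subobject of $(p_0,s_0)$, it is a reflexive relation on $X$, hence, $\mathbb C$ being Mal'cev, an equivalence relation. By the correspondence above it is automatically the largest equivalence relation commuting with $R$, so it is the sought $Z(R)$.

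It then remains to compute it through the formula $Z[u]=\Phi_Y[p_Y,(1,u)]$ of Proposition~\ref{centr}, now with $Y$ the object $(d_1,s_0):R\rightleftarrows X$ of the fibre, the ambient object $(p_0,s_0):X\times X\rightleftarrows X$ playing the role of ``$X$'', and $u=(d_1,d_0)$. Here I would identify the two ingredients. On the one hand, under the canonical identification of $Pt_R\mathbb C$ with the fibre of points over the object $(d_1,s_0):R\rightleftarrows X$ in $Pt_X\mathbb C$, the functor $\tau^*_Y$ internal to $Pt_X\mathbb C$ for the object $R$ is exactly the change of base $d_1^*:Pt_X\mathbb C\to Pt_R\mathbb C$ along the split epimorphism $d_1:R\to X$, whose right adjoint is $\Phi_{d_1}$ (which exists by f.a.c.c.). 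On the other hand, the product $Y\times(X\times X)$ taken in $Pt_X\mathbb C$ is the pullback $R\times_X(X\times X)$ along $d_1$ and $p_0$, canonically isomorphic to $R\times X$ with projection $p_R$; under this isomorphism the canonical section $(1,u)$ becomes $(1,d_0):R\to R\times X$. Substituting these identifications into $Z[u]=\Phi_Y[p_Y,(1,u)]$ yields precisely $\Phi_{d_1}[p_R,(1,d_0)]$, whose domain is therefore $Z(R)$.

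I expect the main obstacle to lie exactly in this last identification: the bookkeeping through the iterated fibration that recognizes the internal $\tau^*$ as the external $d_1^*$, together with the computation of the fibre product $R\times_X(X\times X)\cong R\times X$ that turns the abstract section $(1,u)$ into the concrete $(1,d_0)$ (whereas the ambient object itself would have produced $(1,d_1)$). Everything else is a faithful transcription, inside the strongly unital a.c.c.\ fibre $Pt_X\mathbb C$, of the centralizer construction already established in Proposition~\ref{centr} together with the equivalence-relation dictionary recalled just above.
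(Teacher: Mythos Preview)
Your proposal is correct and follows exactly the paper's approach: reduce to the centralizer of the subobject $(d_1,d_0):R\rightarrowtail X\times X$ in the unital a.c.c.\ fibre $Pt_X\mathbb C$, then invoke Proposition~\ref{centr} and identify the internal $\Phi_Y[p_Y,(1,u)]$ with $\Phi_{d_1}[p_R,(1,d_0)]$. The only difference is that you spell out the bookkeeping (the identification $Pt_{(d_1,s_0)}(Pt_X\mathbb C)\cong Pt_R\mathbb C$ turning the internal $\tau_Y^*$ into $d_1^*$, and the fibre-product computation $R\times_X(X\times X)\cong R\times X$ with its section $(1,d_0)$) that the paper leaves implicit in its one-line appeal to Proposition~\ref{centr}.
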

\proof
Since the category $\mathbb C$ is fiberwise algebraically cartesian closed, the unital fibre $Pt_X\mathbb C$ has centralizers of subobjects, and according to the previous recall about equivalence relations and their commutations, the centralizer of $R$ in $\mathbb C$ is nothing but the centralizer of the following subobject in the fibre $Pt_X\mathbb C$: 
$$
\xymatrix@=30pt{
{R\;} \ar@{>->}[r]^{(d_1,d_0)}\ar@<-1ex>[rd]_{d_1} & X\times X \ar@<-1ex>[d]_{p_0} \\
 & X \ar[lu]_{s_0} \ar[u]_{s_0} 
              }
$$
According to Proposition \ref{centr}, it is given by $\Phi_{d_1}[p_R,(1,d_0)]$.
\endproof

So, when the category $\mathbb C$ is exact, Mal'cev and fiberwise algebraically cartesian closed, the existence of centralizers makes the Schreier- Mac Lane extensions classification theorem hold, see \cite{B16}.

\section{Some properties of the algebraic exponentiable morphisms}

\subsection{Stability under pullback along split epimorphisms}

We show first that the algebraically exponentiable morphisms are stable under pullback along split epimorphisms. It is a consequence of the following very general lemma:
\begin{lemma}
Let $\mathbb E$ be a category with pullbacks and $U:\mathbb E\rightarrow \mathbb F$ a functor which admits a right adjoint $G$. Then, for any object $X\in \mathbb E$, the induced functor:
$$
\xymatrix@=20pt{ U_X: Pt_X\mathbb E \ar[r] & Pt_{UX}\mathbb F}
$$
has a right adjoint $G_X$. When moreover the category $\mathbb F$ has pullbacks, any map $f:X\rightarrow X'$ makes the following leftward diagram commute up to a natural isomorphism:
$$
\xymatrix@=20pt{
 Pt_{X'}\mathbb E \ar[d]_{f^*} \ar@<-1ex>@{.>}[r]_{U_{X'}} & \ar[l]_{G_{X'}} Pt_{UX'}\mathbb F \ar[d]^{Uf^*}\\
  Pt_X\mathbb E \ar@<1ex>@{.>}[r]^{U_X} & \ar[l]^{G_X} Pt_{UX}\mathbb F
  }
$$
When, in addition, $U$ is left exact the previous diagram also commutes at the level of doted arrows.
\end{lemma}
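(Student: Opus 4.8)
The plan is to build the right adjoint $G_X$ explicitly by a pullback, verify the adjunction by transposing across $U\dashv G$, and then obtain both commutations as instances of the pasting lemma for pullbacks, using that the right adjoint $G$ preserves limits and that $U$, when left exact, preserves the pullbacks defining change of base.

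First I would construct $G_X$. Writing $\eta$ for the unit of $U\dashv G$, and given a point $(B,q,t)$ in $Pt_{UX}\mathbb F$, I would take $P$ to be the pullback in $\mathbb E$ of $Gq:GB\rightarrow GUX$ along $\eta_X:X\rightarrow GUX$, with projections $\pi:P\rightarrow X$ and $p_B:P\rightarrow GB$. The adjoint transpose $\tilde t=Gt\cdot\eta_X$ of $t$ satisfies $Gq\cdot\tilde t=G(qt)\cdot\eta_X=\eta_X$, so the pair $(\tilde t,1_X)$ factors through $P$ and yields a splitting $\sigma:X\rightarrow P$ of $\pi$; I set $G_X(B,q,t)=(P,\pi,\sigma)$. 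To check that $U_X\dashv G_X$, I would transpose: a map $\beta:UA\rightarrow B$ over $UX$ corresponds to $\tilde\beta:A\rightarrow GB$, and naturality of $\eta$ converts the two conditions $q\beta=Up$ and $\beta\cdot Us=t$ into $Gq\cdot\tilde\beta=\eta_X\cdot p$ and $\tilde\beta\cdot s=\tilde t$, which by the universal property of $P$ is exactly a morphism $(A,p,s)\rightarrow(P,\pi,\sigma)$ in $Pt_X\mathbb E$. The resulting bijection is natural, giving the adjunction.

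For the commutation of the rightward-pointing right adjoints I would evaluate both composites on a point $(B,q,t)$ over $UX'$ and identify them with the single pullback $GB\times_{GUX'}X$ taken along $Gq$ and $\eta_{X'}\cdot f$. Since $G$ is a right adjoint it preserves the pullback defining $(Uf)^*$, so $G$ of the pulled-back object is $GB\times_{GUX'}GUX$; pasting this with the pullback along $\eta_X$ that defines $G_X$, and using the naturality square $GUf\cdot\eta_X=\eta_{X'}\cdot f$, collapses $G_X\cdot(Uf)^*$ to $GB\times_{GUX'}X$. On the other hand $f^*\cdot G_{X'}$ is the pullback along $f$ of $P'=GB\times_{GUX'}X'$, and a second application of the pasting lemma identifies it with the same object. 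The comparison map respects the projections to $X$ and the sections, hence is an isomorphism in $Pt_X\mathbb E$.

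Finally, when $U$ is left exact the commutation of the dotted (left-adjoint) arrows is immediate: $f^*(A,p,s)$ is computed by the pullback $A\times_{X'}X$, which $U$ preserves, so $U_Xf^*(A,p,s)=UA\times_{UX'}UX=(Uf)^*U_{X'}(A,p,s)$ compatibly with the point structure. The step I expect to be the main obstacle is the middle one: obtaining the Beck--Chevalley-type isomorphism $f^*\cdot G_{X'}\cong G_X\cdot(Uf)^*$ cleanly. Its crux is that, although $(Uf)^*$ and $G_{X'}$ are themselves right adjoints, everything reduces to $G$ preserving the defining pullback together with the naturality square for $\eta$ aligning the two cospans over $GUX'$; once these are in place the identification is a routine chase of pasted pullbacks.
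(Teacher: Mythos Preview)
Your proposal is correct and follows essentially the same approach as the paper: the paper constructs $G_X$ by the very same pullback of $G\tau$ along $\eta_X$, attributes the Beck--Chevalley-type commutation of the right adjoints to ``the naturality of the unit $\eta$ and the fact that the right adjoint functor $G$ preserves pullbacks'' (which is exactly your pasting argument), and declares the left-exact case ``straightforward''. You have simply spelled out the details that the paper leaves implicit.
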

\proof
Let $(\tau,\sigma):T\rightleftarrows UX$ be an object of $Pt_{UX}\mathbb F$. It is straighforward to check that $G_X(\tau,\sigma)$ is given by the following pullback in $\mathbb E$, where $\eta_X$ is the unit of the adjunction:$$
\xymatrix@=20pt{
 \bullet \ar[d] \ar[r] & GT \ar[d]_{G\tau}\\
  X \ar[r]_{\eta_X} \ar@<-1ex>[u] & GUX \ar@<-1ex>[u]_{G\sigma}
  }
$$
The second point is a consequence of the naturality of the unit $\eta$ and of the fact that the right adjoint functor $G$ preserves pullbacks. From that, the last point is straightforward.
\endproof
Whence the following:
\begin{prop}\label{pulst}
Let $\mathbb C$ be a finitely complete category. Then the algebraically exponentiable morphisms in $\mathbb C$ are stable under pullback along split epimorphisms. Moreover any pullback in the category $\mathbb C$ with $y$ algebraically exponentiable:
$$
\xymatrix@=20pt{
  X \ar[d]_f \ar[r]^x & X' \ar[d]_{f'}\\
  Y \ar[r]_{y} \ar@<-1ex>[u]_s & Y' \ar@<-1ex>[u]_{s'}
  }
$$
satisfies the following Beck-Chevalley conditions, i.e. makes the following diagrams commute up to natural isomorphisms:
$$
\xymatrix@=20pt{
 Pt_{X}\mathbb C  \ar[r]^{\Phi_x} & Pt_{X'}\mathbb C \ar@<1ex>@{.>}[l]^{x^*} &&  Pt_{X}\mathbb C  \ar[r]^{\Phi_x} \ar[d]_{s^*} & Pt_{X'}\mathbb C \ar[d]^{s'^*} \ar@<1ex>@{.>}[l]^{x^*}\\
  Pt_{Y}\mathbb C \ar[u]^{f^*} \ar[r]_{\Phi_y} & Pt_{Y'}\mathbb C \ar@<-1ex>@{.>}[l]_{y^*}  \ar[u]_{f'^*} && Pt_{Y}\mathbb C  \ar[r]_{\Phi_y} & Pt_{Y'}\mathbb C \ar@<-1ex>@{.>}[l]_{y^*} 
  }
$$
\end{prop}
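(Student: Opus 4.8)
The plan is to deduce the whole statement from the preceding general Lemma, applied to the change-of-base functor $U=y^*\colon Pt_{Y'}\mathbb C\to Pt_Y\mathbb C$, which by hypothesis (that $y$ is algebraically exponentiable) admits the right adjoint $G=\Phi_y$. The conceptual engine is the standard identification of an iterated fibre of points: for any object $\mathcal Z=(z\colon Z\to Y',\sigma)$ of $\mathbb E=Pt_{Y'}\mathbb C$ there is a canonical equivalence $Pt_{\mathcal Z}(Pt_{Y'}\mathbb C)\simeq Pt_Z\mathbb C$. Indeed a point over $\mathcal Z$ in $Pt_{Y'}\mathbb C$ is a split epimorphism $(p,\iota)$ of $Pt_{Y'}\mathbb C$ with codomain $\mathcal Z$; since the structural map to $Y'$ of its domain is forced to be $z\,p$ and its splitting to be $\iota\sigma$, the data reduce exactly to a split epimorphism $p\colon V\rightleftarrows Z$ in $\mathbb C$. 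I would record this equivalence first, in the two instances needed: $Pt_{\mathcal X'}(Pt_{Y'}\mathbb C)\simeq Pt_{X'}\mathbb C$ for $\mathcal X'=(f',s')$, and, for the terminal object $\mathcal Y'=(1_{Y'},1_{Y'})$, the identity $Pt_{\mathcal Y'}(Pt_{Y'}\mathbb C)=Pt_{Y'}\mathbb C$ (and likewise with $Y$ in place of $Y'$).

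First I would prove that $x$ is algebraically exponentiable. Applying the Lemma to $U=y^*$ at the object $\mathcal X'$ yields a right adjoint $G_{\mathcal X'}$ to $U_{\mathcal X'}\colon Pt_{\mathcal X'}(Pt_{Y'}\mathbb C)\to Pt_{U\mathcal X'}(Pt_Y\mathbb C)$. Because the square is a pullback, $U\mathcal X'=y^*(f',s')$ is precisely $\mathcal X=(f,s)$, so the target is $Pt_{\mathcal X}(Pt_Y\mathbb C)\simeq Pt_X\mathbb C$. Under the two equivalences I would check that $U_{\mathcal X'}$ is exactly $x^*$: a split epimorphism $(p,\iota)\colon V\rightleftarrows X'$ is sent by $U_{\mathcal X'}$ to the pullback of $V\to Y'$ along $y$, whereas $x^*$ sends it to the pullback of $V\to X'$ along $x$; these agree by the pasting law for pullbacks, since $X=Y\times_{Y'}X'$ gives $Y\times_{Y'}V\cong X\times_{X'}V$ over $X$. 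Hence $G_{\mathcal X'}$ is right adjoint to $x^*$, which we name $\Phi_x$, and $x$ is algebraically exponentiable; this is stability under pullback along the split epimorphism $f'$.

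It then remains to extract the two Beck--Chevalley squares from the second part of the Lemma, which supplies, for every map $\phi\colon A\to B$ of $\mathbb E$, a natural isomorphism $G_A\circ (U\phi)^*\cong \phi^*\circ G_B$ (this needs only that $\mathbb F=Pt_Y\mathbb C$ has pullbacks, which holds as $\mathbb C$ is finitely complete). I would apply it to the two canonical maps of $\mathbb E=Pt_{Y'}\mathbb C$ attached to $\mathcal X'$: the structural map $f'\colon\mathcal X'\to\mathcal Y'$ and its splitting $s'\colon\mathcal Y'\to\mathcal X'$. Under the equivalences, change of base in $\mathbb E$ along $f'$ (resp. $s'$) becomes change of base in $\mathbb C$ along $f'\colon X'\to Y'$ (resp. $s'\colon Y'\to X'$), while their images under $U=y^*$ become $f$ and $s$; moreover $G_{\mathcal Y'}=\Phi_y$ and $G_{\mathcal X'}=\Phi_x$. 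Taking $\phi=f'$ therefore yields $\Phi_x\circ f^*\cong f'^*\circ\Phi_y$, the first square, and taking $\phi=s'$ yields $\Phi_y\circ s^*\cong s'^*\circ\Phi_x$, the second.

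The adjunction bookkeeping is entirely delivered by the Lemma; the only place where the geometry of the pullback square enters is the dictionary between the iterated construction $Pt_{\mathcal X'}(Pt_{Y'}\mathbb C)$ and $Pt_{X'}\mathbb C$, and the verification that through it $U_{\mathcal X'}$, the two change-of-base functors along $f'$ and $s'$, and their $U$-images translate into $x^*$, $f'^*$, $s'^*$, $f^*$ and $s^*$ respectively. I expect the main obstacle to be keeping this translation honest: one must check that limits, hence the pullbacks computing all these change-of-base functors, are created from $\mathbb C$ through both layers of the points construction, and that the comparison isomorphism $X\times_{X'}V\cong Y\times_{Y'}V$ is natural and compatible with the splittings, so that the equivalences are genuinely natural and the isomorphisms produced by the Lemma descend precisely to the two stated squares.
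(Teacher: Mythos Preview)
Your proof is correct and follows exactly the same route as the paper: apply the preceding general Lemma to $U=y^*\colon Pt_{Y'}\mathbb C\to Pt_Y\mathbb C$ with right adjoint $\Phi_y$, use the identifications $Pt_{(f',s')}(Pt_{Y'}\mathbb C)\simeq Pt_{X'}\mathbb C$ and $Pt_{(f,s)}(Pt_{Y}\mathbb C)\simeq Pt_{X}\mathbb C$ to read $U_{\mathcal X'}$ as $x^*$, and then instantiate the naturality clause of the Lemma at the two morphisms $f'\colon\mathcal X'\to\mathcal Y'$ and $s'\colon\mathcal Y'\to\mathcal X'$ of $Pt_{Y'}\mathbb C$ to obtain the two Beck--Chevalley squares. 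The paper's proof is terser but makes precisely these moves; your added verification that $U_{\mathcal X'}$ really is $x^*$ via the pasting law is the honest bookkeeping the paper leaves implicit.
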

\proof
Apply the previous lemma to the functor $y^*:Pt_{Y'}\mathbb C\rightarrow Pt_Y\mathbb C$ and notice that we have $Pt_{(f',s')}(Pt_{Y'}\mathbb C)=Pt_{X'}\mathbb C$ and $Pt_{(f,s)}(Pt_{Y}\mathbb C)=Pt_{X}\mathbb C$. Then consider the following morphisms in $Pt_{Y'}\mathbb C$:
$$
\xymatrix@=10pt{
  X' \ar@<-1ex>[dr]_{f'} \ar[rr]^{f'} && Y' \ar[dl]_{1} && Y' \ar@<-1ex>[dr]_1 \ar[rr]^{s'} && X' \ar[dl]_<<<<<{f'} \\
   &  Y' \ar@<-1ex>[ur]_{1}  \ar[ul]_{s'}      &      &&   &  Y' \ar@<-1ex>[ur]_{s'}  \ar[ul]_1
} 
$$
\endproof
Then we get immediately the following:
\begin{coro}\label{splitalg}
When a split epimorphism $(f,s) :X\rightleftarrows Y$ is algebraically exponentiable, the induced endofunctor $f^*.\Phi_{f}$ on $Pt_X\mathbb C$ is (up to a natural isomorphism) equal to the endofunctor $\Phi_{p_0}.p_1^*$, where $p_0$ and $p_1$ are given by the kernel equivalence relation:
$$\xymatrix@=10pt
{
R[f]  \ar@<-2ex>[rr]_{p_{1}} \ar@<1ex>[rr]^{p_{0}}  && X \ar[ll]^{s_0} \ar@{->>}[rr]_{f} && Y
}
$$
This endofunctor $\Phi_{p_0}.p_1^*$ on $Pt_X\mathbb C$ inherits the left exact comonad structure induced by the adjoint pair $(f^*,\Phi_{f})$.
\end{coro}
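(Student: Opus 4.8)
The plan is to obtain the claimed identity as a single instance of the Beck--Chevalley isomorphism already established in Proposition \ref{pulst}, applied to the kernel-pair pullback of $f$. First I would write down the defining pullback square of the kernel equivalence relation $R[f]=X\times_Y X$, with the two projections $p_0,p_1$ to $X$ and the common composite $f.p_0=f.p_1$ to $Y$; its bottom edge is $f$, which is algebraically exponentiable by hypothesis. Since this realizes the top map $p_0$ as the pullback of the algebraically exponentiable map $f$ along the split epimorphism $f$ (the right-hand edge), the stability part of Proposition \ref{pulst} already guarantees that $p_0$ is itself algebraically exponentiable, so that $\Phi_{p_0}$, and hence the composite endofunctor $\Phi_{p_0}.p_1^*$ on $Pt_X\mathbb C$, is well defined.

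The key step, and the only point I expect to require care, is to fit this square into the precise shape demanded by Proposition \ref{pulst}, namely a pullback of \emph{split epimorphisms} with compatible sections. The right-hand vertical split epimorphism is $(f,s)$ itself; the matching section of the left-hand projection $p_1$ is then forced to be the twisted map $\tilde s=(s.f,1_X):X\rightarrow R[f]$ rather than the diagonal $s_0$, since the compatibility condition reads $p_0.\tilde s=s.f$ and not $p_0.s_0=1_X$. I would stress that $(p_1,\tilde s)$ is nothing but $f^*(f,s)$, which explains conceptually why this is the correct point structure on the square. With this choice the kernel-pair square is genuinely a pullback of split epimorphisms, and the first Beck--Chevalley square of Proposition \ref{pulst}, read with the algebraically exponentiable map $y=f$ and $x=p_0$, yields at once the natural isomorphism $f^*.\Phi_f\cong\Phi_{p_0}.p_1^*$. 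Because $p_1^*$ and $\Phi_{p_0}$ depend only on the maps $p_0,p_1$ and not on the chosen sections, this is an assertion about the bare endofunctors, as required.

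For the final assertion I would argue by transport of structure. The adjunction $f^*\dashv\Phi_f$ makes $f^*.\Phi_f$ a comonad on $Pt_X\mathbb C$, with counit the natural transformation $\varepsilon_f$ already named in Proposition \ref{mono2}, and this comonad is left exact because $f^*$ is a pullback functor and $\Phi_f$ is a right adjoint, so both preserve finite limits. Transporting this left exact comonad structure along the natural isomorphism $\Phi_{p_0}.p_1^*\cong f^*.\Phi_f$ then endows $\Phi_{p_0}.p_1^*$ with exactly the comonad structure induced by the adjoint pair $(f^*,\Phi_f)$, which is the last claim to be verified.
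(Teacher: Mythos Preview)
Your proposal is correct and follows exactly the route the paper intends: the corollary is stated as an immediate consequence of Proposition~\ref{pulst}, and you have simply spelled out that consequence by instantiating the Beck--Chevalley square at the kernel-pair pullback of $f$. Your extra care in identifying the compatible section $\tilde s=(s.f,1_X)$ of $p_1$ (rather than the diagonal $s_0$) and in observing that the resulting isomorphism is independent of this choice is a welcome clarification of what the paper leaves implicit.
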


\subsection{The efficiently regular context}

A regular category $\mathbb C$ is said to be efficiently regular, when, in addition, any equivalence relation $S$, on an object $X$, which is included in an effective equivalence relation $S \stackrel{m}{\rightarrowtail} R[f]$ by an effective monomorphism $m$, is itself effective. The main examples are the categories $TopGp$ and $TopAb$ of topological groups and abelian groups. Any exact category is efficiently regular. When the category $\mathbb C$ is moreover efficiently regular, we can extend algebraic exponentiability from split epimorphisms to regular epimorphisms and have a kind of converse to Proposition \ref{pulst}. For that, let us begin by the following:
\begin{prop}
Let $\mathbb C$ be efficiently regular. Consider an internal discrete cofibration: $\underline f_1:\underline X_1\rightarrow \underline Y_1$ between two groupoids:
$$\xymatrix@=7pt
{
R[d_0] \ar@<-3ex>[rr]_{d_{2}^1} \ar[rr]_{d_{1}^1} \ar@<1ex>[rr]^{d_{0}^1} \ar[ddd]_{R(f_1)} & & X_1  \ar@<-2ex>[rr]_{d_{1}} \ar@<1ex>[rr]^{d_{0}} \ar[ddd]_{f_1}  && X_0 \ar[ddd]^{f_0} \ar[ll]^{s_0} \\
&&&&\\
&&&&\\
R[d_0] \ar@<-3ex>[rr]_{d_{2}^1} \ar[rr]_{d_{1}^1} \ar@<1ex>[rr]^{d_{0}^1} & & Y_1  \ar@<-2ex>[rr]_{d_{1}} \ar@<1ex>[rr]^{d_{0}}  && Y_0 \ar[ll]^{s_0}
}
$$
Suppose the morphism $f_0$ is algebraically exponentiable. Then the functor $\underline f_1^*: SCof_{\underline Y_1} \rightarrow SCof_{\underline X_1}$
from the split discrete cofibrations above $\underline Y_1$ to the split discrete cofibrations above $\underline X_1$ defined by pulling back along the functor $\underline f_1$ admits a right adjoint which is constructed levelwise.
\end{prop}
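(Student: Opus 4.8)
The plan is to realise both $SCof_{\underline Y_1}$ and $SCof_{\underline X_1}$ as categories of truncated internal diagrams described levelwise by points, to note that $\underline f_1^*$ is computed levelwise by the change-of-base functors $f_n^*$, and then to build the right adjoint by applying, at each simplicial level, the functor $\Phi_{f_n}$ furnished by Proposition \ref{pulst}, gluing the levels together by the Beck--Chevalley isomorphisms.

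First I would record the levelwise description. A split discrete cofibration $\underline E_1\to\underline Y_1$ is determined by a compatible family of points in the fibres $Pt_{Y_0}\mathbb C$, $Pt_{Y_1}\mathbb C$ and $Pt_{R[d_0]}\mathbb C$, subject to the simplicial identities and to the discreteness conditions; the latter assert precisely that the squares relating consecutive levels through the face maps of $\underline Y_1$ are pullbacks, so that the levels over $Y_1$ and over $R[d_0]$ are recovered as pullbacks of the level over $Y_0$ along the (split) face maps. The functor $\underline f_1^*$ acts on such a family by pulling back levelwise, i.e. by $f_0^*$, $f_1^*$ and $R(f_1)^*$.

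Next I would check that every structural component $f_n$ is algebraically exponentiable. Since $\underline f_1$ is a discrete cofibration, the square exhibiting $X_1$ as a pullback of $X_0$ along a face map displays $f_1$ as the pullback of $f_0$ along the corresponding face map $Y_1\to Y_0$; similarly $R(f_1)$ arises from $f_0$ by iterating such pullbacks. As the face and degeneracy maps of a groupoid are split epimorphisms, Proposition \ref{pulst} guarantees that $f_1$ and $R(f_1)$ are again algebraically exponentiable, and that the associated Beck--Chevalley squares commute, so that each $\Phi_{f_n}$ commutes, up to natural isomorphism, with the restriction functors along the face maps.

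Then I would construct the right adjoint levelwise by setting $\Phi_{\underline f_1}\underline D$ to have components $\Phi_{f_0}(D_0)$, $\Phi_{f_1}(D_1)$ and $\Phi_{R(f_1)}(R(D))$. The Beck--Chevalley isomorphisms of the previous step provide the structural maps of this family and ensure that the simplicial identities and the discreteness pullbacks are preserved; the adjunction $\underline f_1^*\dashv\Phi_{\underline f_1}$ then follows from the levelwise adjunctions $f_n^*\dashv\Phi_{f_n}$, whose units and counits assemble into natural transformations of families. The main obstacle is the verification, in this last step, that the levelwise output genuinely lies in $SCof_{\underline Y_1}$: one must check that the maps transported through the various $\Phi_{f_n}$ still obey the groupoid axioms and, above all, that the middle level remains the kernel relation of the transported source map, so that one obtains an internal groupoid and not merely a reflexive graph. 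This is exactly where efficient regularity is used, ensuring that the equivalence relation reconstructed at the middle level is effective; once this is secured, the gluing of the levels is encoded precisely by the Beck--Chevalley conditions of Proposition \ref{pulst}.
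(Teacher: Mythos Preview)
Your overall approach---construct the right adjoint levelwise via $\Phi_{f_0}$, $\Phi_{f_1}$, $\Phi_{R(f_1)}$ and glue the levels using the Beck--Chevalley isomorphisms from Proposition~\ref{pulst}---is exactly what the paper does. However, two points deserve correction.

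First, you have misidentified the role of efficient regularity. In the paper's proof this hypothesis is \emph{not} invoked at all; it appears in the statement only because the proposition feeds into the next one, where efficient regularity is genuinely needed. The groupoid structure on the levelwise output is obtained entirely from Beck--Chevalley: since $\Phi_{f_n}$ is a right adjoint it preserves limits, so the kernel-pair level $R[d_0]$ is automatically carried to the kernel pair of the transported $d_0$, and the composition map transports accordingly. Your worry that ``the middle level remains the kernel relation of the transported source map'' is resolved by limit preservation, not by effectiveness of equivalence relations.

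Second, you gloss over the point that the paper isolates as the delicate one: obtaining the map $d_1$ on the output. Beck--Chevalley, as stated in Proposition~\ref{pulst}, applies to the square indexed by $d_0$ because that square is a pullback (this is what ``discrete cofibration'' means). To transport $d_1$ one needs the $d_1$-square to be a pullback too, and the paper secures this by the observation that \emph{a discrete cofibration between groupoids is automatically also a discrete fibration}. Only then can Beck--Chevalley be applied to the $d_1$-square as well, producing the full reflexive-graph (and then groupoid) structure on $W_1\rightrightarrows W_0$. You should make this step explicit rather than folding it into ``the discreteness conditions''.
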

\proof
According to Proposition \ref{pulst}, since the vertical square with the $d_0$ is a pullback (the functor $\underline f_1$ being a discrete cofibration), the maps $f_1$ and $R(f_1)$ are also algebraically exponentiable. Let $(\underline{\alpha}_1,\underline{\beta}_1): \underline{T}_1\rightleftarrows \underline{X}_1$ be a split discrete fibration above $\underline{X}_1$. We are going to show that the split epimorphisms $\Phi_{f_0}(\alpha_0,\beta_0)=(\bar{\alpha}_0,\bar{\beta}_0):W_0\rightleftarrows Y_0$ and $\Phi_{f_1}(\alpha_1,\beta_1)=(\bar{\alpha}_1,\bar{\beta}_1):W_1\rightleftarrows Y_1$ are actually underlying a discrete cofibration above $\underline{Y}_1$, which will determine the construction of the levelwise right adjoint in question. For that, let us consider the following diagram:
$$\xymatrix@=7pt
{
 &&&  T_1  \ar@<-1ex>[rr]_{} \ar@<1ex>[rr]^{d_{0}} \ar@<-1ex>[dlll]_{\alpha_1}  && T_0 \ar@<1ex>[dlll]^{\alpha_0} \ar[ll]^{}
&&&&\\
 X_1  \ar@<-1ex>[rr]_{d_{1}} \ar@<1ex>[rr]^>>{d_{0}} \ar[ddd]_<<<<<{f_1}  && X_0 \ar[ddd]^<<<<<{f_0} \ar[ll]^{} \\
&&&&\\
 &&&  W_1  \ar@<-1ex>@{.>}[rr]_{} \ar@<1ex>[rr]^{d_{0}} \ar@<-1ex>[dlll]_{\bar{\alpha}_1}  && W_0 \ar@<1ex>[dlll]^{\bar{\alpha}_0} \ar[ll]^{}
&&&&\\
Y_1  \ar@<-1ex>[rr]_{d_{1}} \ar@<1ex>[rr]^>>{d_{0}}  && Y_0 \ar[ll]^{}
}
$$
Since the square with the $d_0$ in the statement is a pullback underlying a pullback of split epimorphisms, then, according to Proposition \ref{pulst}, the Beck-Chevalley condition holds for this square. Consequently the lower quadrangle with $d_0$ above is underlying a pullback of split epimorphisms. But a discrete cofibration between groupoids is also a discrete fibration and the square with the $d_1$ in the statement is also a pullback. Moreover, the Beck-Chevalley condition not only says that the co-free object are preserved by pullbacks, but also the universal natural transformation $f_i^*.\Phi_{f_i}\Rightarrow 1_{Pt_{Y_i}\mathbb C}$, $i\in \{0,1\}$. This determines an arrow $d_1:W_1 \rightarrow Y_1$ which makes also the lower quadrangle with $d_1$ a pullback, and produces a reflexive graph $W_1\rightrightarrows W_0$. The same Beck-Chevalley condition makes this reflexive graph underlying a groupoid structure and $\underline{\alpha}_1$ a discrete fibration which is, by construction, a levelwise cofree structure with respect to the pulling back along the functor $\underline f_1$.
\endproof
Whence the following ``converse'' to Proposition \ref{pulst}:
\begin{prop}
Let $\mathbb C$ be an efficiently regular. Consider the following pullback with $f'$ a regular epimorphism:
$$
\xymatrix@=20pt{
 X \ar@{->>}[r]^{f} \ar[d]_{x} & Y \ar[d]^{y}\\
 X'\ar@{->>}[r]_{f'}  & Y' 
 }
$$
Then, when the morphism $x$ is algebraically exponentiable, so is the morphism $y$, and we have the Beck-Chevalley commutation:
$$
\xymatrix@=20pt{
 Pt_{X} \ar[d]_{\Phi_x}  & Pt_{Y} \ar[l]_{f^*} \ar[d]^{\Phi_y}\\
 Pt_{X'}  & Pt_{Y'} \ar[l]^{f'^*} 
 }
$$ 
\end{prop}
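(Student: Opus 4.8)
The plan is to realize the regular epimorphism $f'\colon X'\twoheadrightarrow Y'$ via its kernel pair and to transport the algebraic exponentiability of $x$ up to a discrete cofibration, so that the previous proposition applies. First I would form the kernel pair $R[f']\rightrightarrows X'$ and, using efficient regularity together with the given pullback, observe that pulling back along the legs produces a compatible pair of relations over $X$ and $X'$; the point of efficient regularity is exactly to guarantee that the relevant equivalence relation obtained upstairs remains effective, so that it is the kernel pair of an honest regular epimorphism. In this way the pullback square displaying $f$ over $f'$ is recognized as the object level ($\underline f_0=x$, and its companion on arrows) of an internal discrete cofibration $\underline f_1\colon\underline X_1\to\underline Y_1$ between groupoids whose object-of-objects map is precisely $x$.

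Having set this up, the second step is to invoke the previous proposition. Since $x$ is algebraically exponentiable by hypothesis, that proposition yields a right adjoint, constructed levelwise, to the change-of-base functor $\underline f_1^*$ between categories of split discrete cofibrations. I would then read off from the levelwise construction what happens on the bottom row, i.e. over $Y'=\underline Y_0$: the levelwise cofree structure specializes to a right adjoint $\Phi_y$ to $y^*\colon Pt_{Y'}\mathbb C\to Pt_Y\mathbb C$, which is exactly the assertion that $y$ is algebraically exponentiable. The Beck--Chevalley commutation $\Phi_x\circ f^*\cong f'^*\circ\Phi_y$ should fall out of the same levelwise compatibility, since the discrete cofibration encodes precisely the pullback relating the two object levels, and the right adjoints were built compatibly along it; this is the categorical shadow of the Beck--Chevalley conditions already established in Proposition \ref{pulst} for the split-epimorphism squares sitting inside the groupoid diagram.

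The main obstacle I expect is the first step: correctly exhibiting the given square as the object-level of a genuine \emph{discrete cofibration of groupoids} whose arrow-level maps are again algebraically exponentiable. One must check that $R[f']$ and its pullback $R[f]$ assemble into groupoids $\underline Y_1$ and $\underline X_1$ with $\underline f_1$ a discrete cofibration, and that the arrow-object map is algebraically exponentiable — this is where efficient regularity does the real work, ensuring the needed relations are effective and hence that the kernel-pair groupoids genuinely exist as internal groupoids in $\mathbb C$, and where Proposition \ref{pulst} is used to propagate exponentiability from $x$ to the arrow level along the pullback squares with the $d_0$'s and $d_1$'s. Once the discrete cofibration is in hand, the rest is essentially a matter of specializing the levelwise construction to the bottom row and tracking the Beck--Chevalley isomorphisms, which I expect to be routine given the machinery already assembled.
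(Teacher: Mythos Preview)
Your overall setup is on the right track---form the kernel-pair groupoids $\underline R_1[f]$ and $\underline R_1[f']$, observe that the induced functor $\underline R_1(x)$ between them is a discrete cofibration with $x$ at the object level, and invoke the previous proposition---but the argument goes off the rails in two related places.

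First, efficient regularity plays no role whatsoever in constructing the kernel-pair groupoids or the discrete cofibration between them. Kernel pairs exist in any finitely complete category, and an equivalence relation is always an internal groupoid; likewise, the induced map on kernel pairs of a pullback square is automatically a discrete (co)fibration. So your claim that efficient regularity ``ensures the needed relations are effective and hence that the kernel-pair groupoids genuinely exist'' misidentifies where the hypothesis is used. Propagating exponentiability from $x$ to the arrow level is handled by Proposition~\ref{pulst}, again with no efficient regularity needed.

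Second, and this is the real gap: the levelwise right adjoint from the previous proposition is a right adjoint to $\underline R_1(x)^*:SCof_{\underline R_1[f']}\to SCof_{\underline R_1[f]}$, not to $y^*$. The object level of these groupoids is $X'$ and $X$, not $Y'$ and $Y$; your identification ``$Y'=\underline Y_0$'' is simply wrong, and there is no ``bottom row over $Y'$'' to read off. What the paper does---and what you are missing---is a \emph{descent} step: one shows that the canonical comparison functors $F_Y:Pt_Y\mathbb C\to SCof_{\underline R_1[f]}$ and $F_{Y'}:Pt_{Y'}\mathbb C\to SCof_{\underline R_1[f']}$ are equivalences. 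Full faithfulness comes from $f$, $f'$ being regular epimorphisms; \emph{essential surjectivity is exactly where efficient regularity enters}, since a split discrete cofibration over $\underline R_1[f]$ yields an equivalence relation sitting by an effective monomorphism inside the effective relation $R[f]$, hence is itself effective and so descends to a split epimorphism over $Y$. Only after transporting the right adjoint across these equivalences do you obtain $\Phi_y$, and the Beck--Chevalley commutation is then forced by the very construction.
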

\proof
Complete the previous pullback by the following diagram:
$$\xymatrix@=7pt
{
R^2[f] \ar@<-3ex>[rr]_{p_{2}^1} \ar[rr]_{p_{1}^1} \ar@<1ex>[rr]^{p_{0}^1} \ar[ddd]_{R^2(x)} & & R[f]  \ar@<-2ex>[rr]_{p_{1}} \ar@<1ex>[rr]^{p_{0}} \ar[ddd]_{R(x)}  && X_0 \ar[ddd]_{x} \ar[ll]^{s_0} \ar@{->>}[rr]^{f}&& Y \ar[ddd]^{y} \\
&&&&\\
&&&&\\
R^2[f'] \ar@<-3ex>[rr]_{p_{2}^1} \ar[rr]_{p_{1}^1} \ar@<1ex>[rr]^{p_{0}^1} & & R[f']  \ar@<-2ex>[rr]_{p_{1}} \ar@<1ex>[rr]^{p_{0}}  && X' \ar[ll]^{s_0} \ar@{->>}[rr]_{f'}&& Y'
}
$$
which determines a discrete cofibration $\underline R_1(x): \underline R_1[f]: \underline R_1[f']$ between the left hand side induced horizontal groupoids. According to the previous proposition the change of base functor $\underline R_1(x)^*: SCof_{\underline R_1[f']} \rightarrow SCof_{\underline R_1[f]}$ admits a right adjoint. Now consider the following commutative square:
$$
\xymatrix@=20pt{
 Pt_{Y'}\mathbb C \ar[r]^{y^*} \ar[d]_{F_{Y'}} & Pt_{Y}\mathbb C \ar[d]^{F_Y}\\
 {SCof_{\underline R_1[f']}\;} \ar[r]_{\underline R_1(x)^*}  & SCof_{\underline R_1[f]}
 }
$$
where the functors $F_Y$ and $F_{Y'}$ are the canonical straighforward functors which are fully faithful since $f$ and $f'$ are regular epimorphisms. They are also essentially surjective, since, in an efficiently regular category, any equivalence fibration which is discretely cofibered above an effective equivalence relation is itself effective. Accordingly the functors $F_Y$ and $F_{Y'}$ are equivalences of categories, and the change of base functor $y^*$ admits a right adjoint. This construction of the right adjoint to $y^*$ imposes the Beck-Chevalley condition. 
\endproof
\begin{coro}\label{regexp}
Let $\mathbb C$ be efficiently regular and $f:X\twoheadrightarrow Y$ a regular epimorphism such that the map $p_0$ below:
$$\xymatrix@=10pt
{
R[f]  \ar@<-2ex>[rr]_{p_{1}} \ar@<1ex>[rr]^{p_{0}}  && X \ar[ll]^{s_0} \ar@{->>}[rr]_{f} && Y
}
$$
 is algebraically exponentiable, then $f$ is itself algebraically exponentiable and we have:  $f^*.\Phi_{f} \simeq \Phi_{p_0}.p_1^*$.

When $\mathbb C$ is efficiently regular and fiberwise algebraically cartesian closed, then any regular epimorphism $f:X\twoheadrightarrow Y$ is algebraically exponentiable. 
\end{coro}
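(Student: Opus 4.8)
The plan is to deduce both assertions from the converse to Proposition~\ref{pulst} proved just above, applied to the kernel pair of $f$; the corollary is really only a convenient instantiation of that result.

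For the first assertion, I would form the kernel equivalence relation $R[f]$, with its two projections $p_0,p_1$ and reflexivity section $s_0$, and observe that
$$
\xymatrix@=18pt{R[f] \ar[r]^{p_1} \ar[d]_{p_0} & X \ar[d]^f \\ X \ar@{->>}[r]_f & Y}
$$
is precisely the defining pullback of the kernel pair, whose bottom edge $f$ is a regular epimorphism by hypothesis. Since $p_0$ is assumed algebraically exponentiable, the converse proposition (read with $x=p_0$, $y=f$ and $f'=f$) immediately gives that the right-hand edge $y=f$ is algebraically exponentiable and supplies the Beck--Chevalley isomorphism $\Phi_{p_0}.p_1^*\simeq f^*.\Phi_f$, which is exactly the claimed formula. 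The one thing to be careful about is the orientation of the square: taking $p_0$ as the algebraically exponentiable leg and $p_1$ as the top map is what makes the Beck--Chevalley square read off as $f^*.\Phi_f\simeq\Phi_{p_0}.p_1^*$, in agreement with Corollary~\ref{splitalg}.

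For the second assertion, the key remark is that $p_0:R[f]\to X$ is itself a split epimorphism, split by the reflexivity map $s_0$ (so that $(p_0,s_0)$ is an object of $Pt_X\mathbb C$). When $\mathbb C$ is fiberwise algebraically cartesian closed, every split epimorphism is algebraically exponentiable; hence $p_0$ is, and the first assertion applies verbatim to conclude that $f$ is algebraically exponentiable.

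I do not expect a genuine obstacle here, since the substantial work has already been carried out in the two preceding propositions on discrete (co)fibrations in the efficiently regular setting: this corollary merely evaluates them at the kernel pair. The only points deserving a line of care are the orientation of the pullback square noted above, and the observation that the reflexivity section exhibits $p_0$ as a split epimorphism, so that fiberwise algebraic cartesian closedness indeed supplies the exponentiability of $p_0$ needed to feed into the first part.
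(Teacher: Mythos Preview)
Your proposal is correct and follows exactly the same approach as the paper's own proof, which simply records the kernel-pair pullback square and says ``apply the previous proposition''. You have in fact spelled out more carefully than the paper does the orientation of the square, the resulting Beck--Chevalley identity, and the observation that $p_0$ is split by $s_0$ so that fiberwise algebraic cartesian closedness applies.
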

\proof
Consider the following pullback:
$$
\xymatrix@=20pt{
 R[f] \ar@{->>}[r]^{p_1} \ar[d]_{p_0} & x \ar[d]^{f}\\
 X\ar@{->>}[r]_{f}  & Y 
 }
$$
and apply the previous proposition.
\endproof

\section{Protomodular context}

We shall now work in the stronger context of a protomodular category $\mathbb C$ \cite{B-1}, which means that any (left exact) change of base functor:
$$h^*: Pt_Y\mathbb C\rightarrow Pt_X\mathbb C$$
is conservative. We get immediately the following:
\begin{prop}
Suppose $\mathbb C$ is protomodular, then any any change of base functor $h^*$ along an algebraic exponentiable map $h:X\rightarrow Y$ reflects commuting pairs. 
\end{prop}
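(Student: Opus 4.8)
The plan is to realize $h^*$ as a comonadic functor and then to import the reflection property already proved for coalgebras in Proposition \ref{refcom}.

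First I would record the ambient facts. A protomodular category is in particular Mal'cev, so every fibre $Pt_Y\mathbb C$ and $Pt_X\mathbb C$ is unital and the notion of commuting pair makes sense on both sides. The hypothesis that $h$ is algebraically exponentiable means that $h^*$ admits a right adjoint $\Phi_h$. Now $h^*$, being a change-of-base (pullback) functor, is left exact, and $\Phi_h$ is left exact as a right adjoint; hence the induced comonad $\mathbb G=h^*\Phi_h$ on $Pt_X\mathbb C$ is a \emph{left exact} comonad. Applying Proposition \ref{refcom} to the unital category $Pt_X\mathbb C$ and to $\mathbb G$, the category $Coalg\,\mathbb G$ is unital and the forgetful functor $U_{\mathbb G}:Coalg\,\mathbb G\rightarrow Pt_X\mathbb C$ reflects commuting pairs.

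Next I would upgrade the adjunction $h^*\dashv\Phi_h$ to a comonadicity statement. The comparison functor $K:Pt_Y\mathbb C\rightarrow Coalg\,\mathbb G$ satisfies $U_{\mathbb G}\,K=h^*$, and I claim $K$ is an equivalence by the (dual) Beck comonadicity theorem. Indeed, $h^*$ has a right adjoint; it is conservative \emph{precisely because $\mathbb C$ is protomodular}; and since $Pt_Y\mathbb C$ is finitely complete and $h^*$ is left exact, $Pt_Y\mathbb C$ has all equalizers and $h^*$ preserves them, a fortiori the equalizers of $h^*$-split pairs. These are exactly the hypotheses of the comonadicity theorem, so $h^*$ is comonadic and $K$ is an equivalence of categories.

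Finally I would assemble the reflection. Let $(f,g)$ be a pair in $Pt_Y\mathbb C$ whose image under $h^*$ commutes in $Pt_X\mathbb C$. Writing $h^*=U_{\mathbb G}\,K$, the pair $\bigl(U_{\mathbb G}Kf,\,U_{\mathbb G}Kg\bigr)$ commutes, so by the reflection property of $U_{\mathbb G}$ the pair $(Kf,Kg)$ commutes in $Coalg\,\mathbb G$; and since $K$ is an equivalence it reflects (and preserves) commuting pairs, being compatible with the finite products and the cooperators defining commutation. Hence $(f,g)$ commutes in $Pt_Y\mathbb C$, as required. The main obstacle is the comonadicity step: the conceptual crux is recognizing that protomodularity supplies exactly the conservativity hypothesis that Beck's theorem needs, the remaining hypotheses being automatic from left exactness and finite completeness. (One could instead verify only that $K$ is fully faithful and left exact, which already suffices to reflect commuting pairs, but the comonadic route is cleaner given the tools at hand.)
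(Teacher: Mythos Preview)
Your proposal is correct and follows essentially the same approach as the paper: both arguments establish that $h^*$ is comonadic (using conservativity from protomodularity, left exactness, and the right adjoint from algebraic exponentiability) and then invoke Proposition~\ref{refcom}. The paper compresses this into two sentences, citing \cite{Gr} for the comonadicity, whereas you spell out the Beck argument and the factorization $h^*=U_{\mathbb G}K$ explicitly.
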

\proof
In the protomodular context, any change of base functor being left exact and conservative, any algebraic exponentiable map $h:X\rightarrow Y$ makes this functor $h^*$ immediately  comonadic \cite{Gr}. Accordingly, the assertion in question is a direct consequence of Proposition \ref{refcom}. 
\endproof

\subsection{Locally algebraically cartesian closed pointed protomodular categories}

On the one hand, in \cite{BJK0} and \cite{BJK}, the notion of \emph{action representative} category was introduced, i.e. a pointed protomodular category $\mathbb C$ in which each object $X$ admits a universal split extension with kernel $X$ (=split extension classifier):
$$
\xymatrix@=20pt
{ 
  {X\;} \ar@{>->}[r]^{\gamma} & D_1(X) \ar@{->>}[r]^{d_0} & {\;D(X)} \ar@<+1,ex>@{>->}[l]^{s_0} 
 }
$$
in the sense that any other split extension with kernel $X$ determines a unique morphism $\chi : G\rightarrow D(X)$ such that the following diagram commutes and the right hand side squares are pullbacks:
$$
\xymatrix@=20pt
{ 
  {X\;} \ar@{>->}[r]^{k} \ar[d]_{1_X} &  H  \ar[d]_{\chi_1} \ar@{->>}[r]^{f} & {\;G} \ar[d]^{\chi} \ar@<+1,ex>@{>->}[l]^{s} \\
  {X\;} \ar@{>->}[r]_{\gamma} & D_1(X) \ar@{->>}[r]^{d_0}  & {\;D(X)} \ar@<+1,ex>@{>->}[l]^{s_0}
 }
$$  

On the other hand, in \cite{Gr}, the second author showed that \emph{when the category $\mathbb C$ is pointed protomodular, it is locally algebraically cartesian closed if and only if the change of base functors along the initial maps have a right adjoint}. 

It is worth translating in detail what this means, and, rather surprisingly, we shall observe that this means a kind of extended dual of the action representativity. So let $\mathbb C$ be a locally algebraically cartesian closed pointed protomodular category. Let $Y$ be an object of $\mathbb C$ and  $\alpha_Y:1\rightarrowtail Y$ its associated initial map. We shall denote $\ss_Y$ by the right adjoint of $\alpha_Y^*$. Starting with any object $T$ in $\mathbb C$, the object $\ss_Y(T)$ is a split epimorphism above $Y$ which is equipped with a (universal) map from its kernel towards $T$. In other words it produces a universal split exact sequence we shall denote this way:
$$
\xymatrix@=20pt{
{\L(Y,T)\;} \ar@{>->}[rr]^{\kappa_T^Y} \ar[d]_{\l_T^Y} && Y\ltimes \L(Y,T) \ar@{->>}[rr]^{\psi_T^Y} && {\;Y} \ar@{>->}@<1ex>[ll]^{\zeta_T^Y}\\
T
              }
$$
which from any given similar situation, i.e. a split exact sequence with codomain $Y$ and a comparison map $h$:
$$
\xymatrix@=20pt{
{K\;} \ar@{>->}[rr]^{k} \ar@(l,l)[dd]_h \ar@{.>}[d]_{\bar h} && X  \ar@{->>}[rr]^{f} \ar@{.>}[d]_{Y\ltimes \bar h}  && Y \ar@{=}[d] \ar@{>->}@<1ex>[ll]^{s}\\
{\L(Y,T)\;} \ar@{>->}[rr]^{\kappa_T^Y} \ar[d]_{\l_T^Y} && Y\ltimes \L(Y,T) \ar@{->>}[rr]^{\psi_T^Y} && {\;Y} \ar@{>->}@<1ex>[ll]^{\zeta_T^Y}\\
T
              }
$$
produces a unique dotted factorization $\bar h$. In particular the following upper canonical split exact sequence:
$$
\xymatrix@=20pt{
{T\;} \ar@{>->}[rr]^{\iota_T} \ar@(l,l)[dd]_{1_T} \ar@{.>}[d]_{\S_T^Y} && Y\times T  \ar@{->>}[rr]^{p_Y} \ar@{.>}[d]_{Y\ltimes \S_T^Y}  && Y \ar@{=}[d] \ar@{>->}@<1ex>[ll]^{\iota_Y}\\
{\L(Y,T)\;} \ar@{>->}[rr]^{\kappa_T^Y} \ar[d]_{\l_T^Y} && Y\ltimes \L(Y,T) \ar@{->>}[rr]^{\psi_T^Y} && {\;Y} \ar@{>->}@<1ex>[ll]^{\zeta_T^Y}\\
T
              }
$$
determines a factorization which will be denoted by $\S_T^Y$. Starting from the more specific one with the diagonal $s_0$ as section, we have also the following factorization:
$$
\xymatrix@=20pt{
{Y\;} \ar@{>->}[rr]^{\iota_1} \ar@(l,l)[dd]_{1_Y} \ar@{.>}[d]_{\varpi_Y} && Y\times Y  \ar@{->>}[rr]^{p_0} \ar@{.>}[d]_{Y\ltimes \varpi_Y}  && Y \ar@{=}[d] \ar@{>->}@<1ex>[ll]^{s_0}\\
{\L(Y,Y)\;} \ar@{>->}[rr]^{\kappa_Y^Y} \ar[d]_{\l_Y^Y} && Y\ltimes \L(Y,Y) \ar@{->>}[rr]^{\psi_Y^Y} && {\;Y} \ar@{>->}@<1ex>[ll]^{\zeta_Y^Y}\\
Y
              }
$$
which we shall analyse below more precisely in the category $Gp$.

\subsection{The category $Gp$ of groups}

We shall explore in detail here the very unusual constructions involved in the local exponentiation property of the category $Gp$. In this case, we have $\L(Y,T)=\mathcal F(\underline Y,T)$, namely $\L(Y,T)$ is the set of applications from the underlying set of the group $Y$ to the underlying set of the group $T$ equipped with the group structure determined by the group structure on $T$. The action of the group $Y$ on this group $\mathcal F(\underline Y,T)$ associates with the pair $(y,\phi)$ the application: $$\phi\circ\tau_y:Y\rightarrow T\; ; \;\;\; z\mapsto \phi(z.y)$$
where $\tau_y$ is the translation on the right in the group $Y$ (in other words we get: $(^y\phi)(z)=\phi(z.y)$). So, in the category $Gp$, the parallelism between cartesian closedness and algebraic cartesian closedness is not only simply formal, but a kind of strong memory of the underlying exponentiation in $Set$. 

The homomorphism $\l_T^Y:\mathcal F(\underline Y,T)\rightarrow T$ is the evaluation at the unit element of $Y$. Given any split extension with a map $h$:
$$
\xymatrix@=20pt{
{K\;} \ar@{>->}[rr]^{} \ar[d]_{h} && Y\ltimes_{\alpha} K \ar@{->>}[rr]^{} && {\;Y} \ar@{>->}@<1ex>[ll]^{}\\
T
              }
$$
the group homomorphism $\bar h:K\rightarrow \mathcal F(\underline Y,T)$ is then defined by $\bar h(k)(y)=h(^yk)$.
In particular we get the group homomorphism $\S_T^Y: T\rightarrow \L(Y,T)$ defined by $\S_T^Y(t)(y)=t$, in other words $\S_T^Y(t)$ is the application constant on $t$. And also we get $\varpi_Y:Y\rightarrow \L(Y,Y)$ defined by $\varpi_Y(y)(z)=z.y.z^{-1}$ which is a very awkward way to integrate the ``inner action'' of $Y$ inside the category $Gp$.

If we start from:
$$
\xymatrix@=20pt{
{K\;} \ar@{>->}[rr]^{} \ar[d]_{h} && X \ar@{->>}[rr]^{f} && {\;Y} \ar@{>->}@<1ex>[ll]^{s}\\
T
              }
$$
we get: $(Y\ltimes \bar h)(x)=(f(x),\bar h(x.s\circ f(x^{-1})))$.

\subsection{First consequences of local algebraic cartesian closedness}

In this section, we shall investigate two important consequences of local algebraic cartesian closedness, namely strong protomodularity and peri-abelianness. These well identified properties in the category $Gp$ now clearly appear to have originated from locally algebraic cartesian closedness.

\subsection{Normal functor and strong protomodularity}

Recall that a left exact functor $U:\mathbb C\rightarrow D$ is called \emph{normal} when it is conservative and it reflects the normal monomorphisms. A protomodular category $\mathbb C$ is said to be \emph{strongly protomodular} \cite{BB} when any change of base functor $h^*:Pt_Y\mathbb C\rightarrow Pt_X\mathbb C$ with respect to the fibration of points is not only conservative but also normal. The categories $Gp$ of groups and $R$-$Lie$ of Lie $R$-algebras, for any commmutative ring $R$, are strongly protomodular. In this section we shall show that, when a protomodular category $\mathbb C$ is locally algebraically cartesian closed, it is necessarily strongly protomodular. Let us begin by the following observation:

\begin{lemma}
Let $U:\mathbb C\rightarrow D$ be a left exact conservative functor. Suppose moreover that $\mathbb D$ is protomodular. If it has a right adjoint $G$, then $U$ is normal.
\end{lemma}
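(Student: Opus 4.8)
The plan is to prove that $U$ reflects normal monomorphisms; since $U$ is assumed conservative, this is precisely what is missing in order to conclude that $U$ is normal. So I start from a monomorphism $m:M\rightarrowtail X$ in $\mathbb C$ whose image $U(m)$ is normal in $\mathbb D$. Because $\mathbb D$ is protomodular, $U(m)$ is normal to a \emph{unique} equivalence relation $\bar R$ on $U(X)$, and this normality is a finite-limit datum: there is a comparison map $\bar\mu:U(M)\times U(M)\to \bar R$ with $U(m)\times U(m)=j.\bar\mu$ (where $j:\bar R\rightarrowtail U(X)\times U(X)=U(X\times X)$ is the inclusion), such that the square formed by $\bar\mu$, the first projection, $U(m)$ and $d_0:\bar R\to U(X)$ is a pullback. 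The whole difficulty is therefore to transport $\bar R$ back to an equivalence relation $R$ on $X$ inside $\mathbb C$; once this is available, left exactness and conservativity of $U$ will reflect both the equivalence-relation structure and the normality pullback.

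First I would record that $U$ is comonadic: being left exact and conservative, admitting the right adjoint $G$, with $\mathbb D$ protomodular, it is comonadic exactly as in \cite{Gr} and in the proof of the previous Proposition. Writing $\Gamma=U.G$ for the induced left exact comonad on $\mathbb D$, the comparison functor yields an equivalence $\mathbb C\simeq Coalg\,\Gamma$, under which an object, a subobject, or an equivalence relation of $\mathbb C$ is nothing but the corresponding structure in $\mathbb D$ carrying a (necessarily unique) $\Gamma$-coalgebra structure, the coaction on $U(X)$ being $\delta_X=U\eta_X:U(X)\to U.G.U(X)=\Gamma U(X)$. In these terms, lifting $\bar R$ to $\mathbb C$ amounts to showing that the subobject $j:\bar R\rightarrowtail U(X\times X)$ is a $\Gamma$-subcoalgebra, i.e. that $\delta_X\times\delta_X$ carries $\bar R$ into $\Gamma\bar R\rightarrowtail \Gamma U(X)\times \Gamma U(X)$ (the inclusion $\Gamma j$ being a monomorphism since $\Gamma$ is left exact).

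This stability is the main obstacle, and I expect it to follow from the uniqueness clause above rather than from any direct construction of a quotient (which need not exist in $\mathbb C$). The idea is to pull $\Gamma\bar R$ back along the coaction and set $\bar R'=(\delta_X\times\delta_X)^{-1}(\Gamma\bar R)$. Since $\Gamma$ is left exact it preserves the whole normality datum, so $\Gamma(U(m))$ is normal to $\Gamma\bar R$; the preimage of an equivalence relation is again an equivalence relation, and, using the naturality square $\delta_X.U(m)=\Gamma(U(m)).\delta_M$ together with the fact that $\Gamma(U(m))$ is normal to $\Gamma\bar R$, a routine pasting of pullbacks shows that $U(m)$ is normal to $\bar R'$ as well. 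Uniqueness of the equivalence relation of normality in the protomodular category $\mathbb D$ then forces $\bar R'=\bar R$, which is exactly the assertion that $\delta_X\times\delta_X$ sends $\bar R$ into $\Gamma\bar R$. Hence $\bar R$ is a $\Gamma$-subcoalgebra and descends to an equivalence relation $R\rightarrowtail X\times X$ in $\mathbb C$ with $U(R)=\bar R$, and $\bar\mu$ descends to a map $c:M\times M\to R$; this step is the analogue, at the level of relations, of the coalgebra-compatibility argument of Proposition \ref{refcom}.

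It then remains to check that $m$ is normal to $R$ in $\mathbb C$. Form in the finitely complete category $\mathbb C$ the pullback $P$ of $d_0:R\to X$ along $m$; applying the left exact functor $U$ yields the pullback of $U(d_0)$ along $U(m)$, which by normality of $U(m)$ to $\bar R=U(R)$ is exactly $U(M)\times U(M)=U(M\times M)$. Thus the canonical comparison $M\times M\to P$ is sent by $U$ to an isomorphism, and since $U$ is conservative it is itself an isomorphism; the same kind of conservativity argument reflects reflexivity, symmetry and transitivity of $R$. Therefore the defining normality square is a pullback in $\mathbb C$, so $m$ is normal to $R$, $U$ reflects normal monomorphisms, and, being conservative, $U$ is normal.
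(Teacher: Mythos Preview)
Your overall strategy coincides with the paper's: both arguments pull the equivalence relation back along the unit of the adjunction and then invoke the protomodular uniqueness of the relation of normality in $\mathbb D$. The paper does this directly in $\mathbb C$ by setting $T=\eta_X^{-1}(G(R))$, while you pass through comonadicity and phrase the same pullback as the $\Gamma$-subcoalgebra condition; since $U(T)=(\delta_X\times\delta_X)^{-1}(\Gamma\bar R)=\bar R'$, these are the same object.

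There is, however, a genuine gap at the step ``a routine pasting of pullbacks shows that $U(m)$ is normal to $\bar R'$''. The only squares available for pasting are the normality pullback for $\Gamma U(m)$ in $\Gamma\bar R$ and the naturality square $\delta_X.U(m)=\Gamma U(m).\delta_M$; but the latter is \emph{not} a pullback in general, so the pasting lemma does not apply, and the $d_0$-fibre of $\bar R'$ over a point of $U(M)$ need not be $U(M)$ just from what you have written. Concretely, your argument only yields $(U(m)\times U(m))^{-1}(\bar R')=\nabla_{U(M)}$, which is the inclusion of $\nabla_{U(M)}$ into $\bar R'$, not the discrete-fibration condition.

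The paper closes this gap with one extra observation you omit: using the triangle identity $\epsilon_{U(X)^2}.(\delta_X\times\delta_X)=1$, one sees immediately that the subobject $\bar R'\rightarrowtail U(X)\times U(X)$ factors through $\bar R$, i.e.\ there is an inclusion $\gamma:\bar R'\hookrightarrow\bar R$. Then the composite $\nabla_{U(M)}\to\bar R'\stackrel{\gamma}{\hookrightarrow}\bar R$ is the given discrete fibration, and since $\gamma$ is a monomorphism the left factor $\nabla_{U(M)}\to\bar R'$ is itself a discrete fibration (an elementary cancellation for pullbacks). With this in place your uniqueness argument goes through and the rest of your proof is correct.
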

\proof
The right adjoint $G$ is left exact and consequently preserves the monomorphims and the equivalence relations. Now let $m:X'\rightarrowtail X$ be a monomorphism in $\mathbb C$ such that the monomorphism $U(m)$ is normal to the equivalence relation $R$ in $\mathbb D$, namely such that we have a discrete fibration in $\mathbb D$:
$$
\xymatrix@=20pt{
{U(X')\times U(X')\;}\ar@{>->}[r]^>>>>>>{\mu} \ar@<1ex>[d]^{p_1}\ar@<-1ex>[d]_{p_0} & R \ar@<1ex>[d]^{d_1}\ar@<-1ex>[d]_{d_0}\\
{U(X')\;} \ar@{>->}[r]_{U(m)} \ar@{.>}[u] & U(X) \ar@{.>}[u]
}             
$$
Since $U(X')\times U(X')=U(X'\times X')$, by adjunction we get a map $\bar{\mu}$ in $\mathbb C$ such that $\epsilon_R.U(\bar{\mu})=\mu$:
$$
\xymatrix@=20pt{
{X'\times X'\;}\ar@{>.>}[r]_{\beta} \ar@<2ex>@{>->}[rr]^{\bar{\mu}} \ar@<1ex>[d]^{p_1}\ar@<-1ex>[d]_{p_0} & T\ar[r]_{\bar{\eta}} \ar@<1ex>[d]^{d_1}\ar@<-1ex>[d]_{d_0} & G(R) \ar@<1ex>[d]^{G(d_1)}\ar@<-1ex>[d]_{G(d_0)}\\
{X'\;} \ar@{>->}[r]_{U(m)} \ar[u] & X \ar[r]_{\eta_X} \ar[u] & G.U(X) \ar[u]
}             
$$
which determines a morphism between the equivalence relations $\nabla_{X'}$ and $G(R)$. We shall set $T=\eta_X^{-1}(G(R))$ and denote by $\beta$ the induced factorization. We are going to show that $m$ is normal to the equivalence relation $T$ and that $U(T)\simeq R$. For that, consider the following diagram in $\mathbb D$:
$$
\xymatrix@=20pt{
{U(X')\times U(X')\;}\ar@{>.>}[r]_>>>>>{U(\beta)} \ar@<2ex>@{>->}[rr]^{U(\bar{\mu})} \ar@<1ex>[d]^{p_1}\ar@<-1ex>[d]_{p_0} \ar@(u,u)[rrr]^{\mu} & U(T)\ar[r]_{U(\bar{\eta})} \ar@<1ex>[d]^{U(d_1)}\ar@<-1ex>[d]_{U(d_0)} & U.G(R) \ar[r]_{\epsilon_{R}}\ar@<1ex>[d]^{}\ar@<-1ex>[d]_{} & R \ar@<1ex>[d]^{d_1}\ar@<-1ex>[d]_{d_0}\\
{U(X')\;} \ar@{>->}[r]_{U(m)} \ar[u] & U(X) \ar[r]_{U(\eta_X)} \ar[u] \ar@(d,d)[rr]_{1_{U(X)}} & U.G.U(X) \ar[u] \ar[r]_>>>>>{\epsilon_{U(X)}} & U(X) \ar[u]
}             
$$
The map $\gamma=\epsilon_R.U(\bar{\eta)}$ produces an inclusion $U(T)\subset R$ of equivalence relations. Since the whole diagram is a discrete fibration and $\gamma$ is a monomorphism, then the left hand side part of the diagram is a discrete fibration. Accordingly $U(m)$ is normal to the equivalence relation $U(T)$. Now, when $\mathbb D$ is protomodular, a monomorphism is normal to at most one equivalence relation, and $\gamma$ is necessarily an isomorphism. On the other hand, since $U$ is left exact and conservative it reflects the pullbacks, so that $m$ is normal to $T$ in $\mathbb C$.
\endproof 
Whence the following:
\begin{theo}
Let $\mathbb C$ be a protomodular category which is locally algebraically cartesian closed. Then $\mathbb C$ is strongly protomodular.
\end{theo}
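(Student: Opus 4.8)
The plan is to reduce the statement directly to the lemma that immediately precedes it. Recall that strong protomodularity asks that every change of base functor $h^*: Pt_Y\mathbb{C}\rightarrow Pt_X\mathbb{C}$ along an arbitrary map $h:X\rightarrow Y$ be not merely conservative (which holds because $\mathbb{C}$ is protomodular) but also \emph{normal}, i.e. that it reflect normal monomorphisms in addition to being conservative. So the whole task is to verify, for each such $h^*$, the hypotheses of the preceding lemma, which states that a left exact conservative functor $U:\mathbb{C}\rightarrow \mathbb{D}$ into a protomodular category $\mathbb{D}$ is automatically normal as soon as it admits a right adjoint.

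First I would fix an arbitrary morphism $h:X\rightarrow Y$ and take $U=h^*:Pt_Y\mathbb{C}\rightarrow Pt_X\mathbb{C}$. I then need to check three things. The functor $h^*$ is left exact: this is standard, as change of base along the fibration of points preserves all finite limits. It is conservative: this is precisely the defining property of protomodularity that we have assumed for $\mathbb{C}$. The codomain category $Pt_X\mathbb{C}$ is protomodular: this is the well-known stability of protomodularity under passage to the fibres $Pt_X$, so $\mathbb{D}=Pt_X\mathbb{C}$ qualifies as the protomodular target required by the lemma. Finally, $h^*$ admits a right adjoint: this is exactly the hypothesis that $h$ is algebraically exponentiable, which holds for \emph{every} $h$ because $\mathbb{C}$ is assumed locally algebraically cartesian closed. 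With all four conditions in hand, the preceding lemma applies and yields that $h^*$ is normal.

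Since $h$ was arbitrary, every change of base functor $h^*$ is conservative and normal, which is exactly the definition of strong protomodularity; this completes the proof. The only genuine content is bookkeeping: confirming that the fibres $Pt_X\mathbb{C}$ of a protomodular category are themselves protomodular, so that the lemma's hypothesis on the target category is met, and recalling that local algebraic cartesian closedness supplies the right adjoint $\Phi_h$ uniformly. I do not expect any real obstacle here; the serious work was already absorbed into the preceding lemma, whose proof used the left exactness of the right adjoint $G$ (here $\Phi_h$) to transport the normal equivalence relation back along the unit and invoked the protomodular fact that a monomorphism is normal to at most one equivalence relation to force the comparison map to be an isomorphism.
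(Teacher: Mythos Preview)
Your proposal is correct and follows essentially the same route as the paper: apply the preceding lemma to each change of base functor $h^*:Pt_Y\mathbb C\rightarrow Pt_X\mathbb C$, checking that it is left exact and conservative (protomodularity), that its target $Pt_X\mathbb C$ is protomodular (stability of protomodularity under passing to fibres), and that it has a right adjoint (local algebraic cartesian closedness). The paper's proof is identical in substance, only phrased more tersely.
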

\proof
Since $\mathbb C$ is protomodular and locally algebraically cartesian closed, so is any fibre $Pt_Y\mathbb C$. Moreover any change of base functor:$$h^*:Pt_Y\mathbb C\rightarrow Pt_X\mathbb C$$
is left exact and conservative since $\mathbb C$ is protomodular; it has a right adjoint since $\mathbb C$ is locally algebraically cartesian closed. By the previous lemma it is normal, and $\mathbb C$ is strongly protomodular.  
\endproof

Now suppose, in addition, that $\mathbb C$ is pointed. Being pointed and strongly protomodular, it is such that two equivalence relations $(R,S)$ centralize if and only if their associated normal subobjects $(I_R,I_S)$ commute, see \cite{B25}; in other words the category $\mathbb C$ is such that we have the so-called equation ``Smith=Huq''.

The assertion of the theorem above was mentioned by G. Janelidze during the CT 2010 conference in Genova, but in the much stricter context of semi-abelian categories, as an immediate consequence of Proposition 9 in \cite{Bor1}, which deals with preservation of colimits and cannot be used in our context.

\subsection{Peri-abelian categories}

When $\mathbb C$ is a regular strongly unital category with finite colimits, the inclusion functor $Ab\mathbb C \rightarrowtail \mathbb C$ from the full subcategory of abelian objects in $\mathbb C$ admits a left adjoint which is given by the cokernel of the diagonal $s_0:X\rightarrowtail X\times X$, or by the coequalizer of the pair $(\iota_0,\iota_1);X\rightrightarrows X\times X$. Recall now the following \cite{B15}:
\begin{defi}\label{peri}
We shall say that a finitely cocomplete, regular Mal'cev category $\mathbb D$ is peri-abelian when the change of base functor along any map $h:Y\rightarrow Y'$ with respect to the fibration of points preserves the associated abelian object.
\end{defi}
If $(AbPt)\mathbb D$ denotes the subcategory of the abelian objects in the fibres of the fibration of points, it is equivalent to saying that the reg-epi reflection $A_{()}$ is cartesian, i.e. it preserves the cartesian maps:
$$\xymatrix@=10pt
{
{(AbPt)\mathbb D\;\;} \ar[rdd] \ar@{>->}[rr]^{}&& Pt\mathbb D \ar[ldd] \ar@(u,u)[ll]_{A_{()}}\\
\\
 & \mathbb D 
}
$$
The categories $Gp$ of groups, $Rg$ of non unitary commutative rings and $\mathbb K$-$Lie$ of $\mathbb K$-Lie algebras are peri-abelian. The previous definition was introduced in \cite{B15} as a tool to produce some cohomology isomorphisms which hold in the Eilenberg-Mac Lane cohomology of groups and in the cohomology of $\mathbb K$-Lie algebras.

\begin{theo}
Let $\mathbb C$ be a finitely cocomplete regular Mal'cev category which is locally algebraically cartesian closed. Then $\mathbb C$ is peri-abelian.
\end{theo}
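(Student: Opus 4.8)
The plan is to exploit the fact that, in the locally algebraically cartesian closed setting, every change of base functor $h^*$ is simultaneously left exact and a left adjoint, so that it preserves both the limit data and the colimit data out of which the fibrewise abelianization is built. The entire argument is then essentially forced.

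First I would fix the fibrewise abelianizations. For each object $Y$, the fibre $Pt_Y\mathbb C$ is pointed, and, $\mathbb C$ being a regular Mal'cev category, it is strongly unital (as recalled at the beginning of Section 2); moreover it is finitely cocomplete, being the coslice on the terminal object of the slice category $\mathbb C/Y$, which is itself finitely cocomplete together with $\mathbb C$. Thus $Pt_Y\mathbb C$ is a regular strongly unital category with finite colimits, and the reflection $A_Y\colon Pt_Y\mathbb C\to Ab(Pt_Y\mathbb C)$ onto its abelian objects exists and is computed, as recalled just before Definition \ref{peri}, as the cokernel of the diagonal $s_0\colon P\rightarrowtail P\times_Y P$ of a point $P$ (equivalently, the coequalizer of the two injections $\iota_0,\iota_1\colon P\rightrightarrows P\times_Y P$). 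By Definition \ref{peri}, peri-abelianness amounts exactly to the commutation $h^*\cdot A_{Y'}\cong A_Y\cdot h^*$ for every map $h\colon Y\to Y'$; equivalently, the reg-epi reflection $A_{()}$ is cartesian.

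Next I would record the two properties of the change of base functor $h^*\colon Pt_{Y'}\mathbb C\to Pt_Y\mathbb C$ on which everything hinges. On the one hand, $h^*$ is left exact, being a change of base functor for the fibration of points; in particular it preserves the zero object, the fibrewise product $P\times_{Y'}P$, the diagonal $s_0$ and the two injections, so that it carries the diagram defining $A_{Y'}(P)$ onto the corresponding diagram for $h^*(P)$ in $Pt_Y\mathbb C$. On the other hand --- and this is the only place where local algebraic cartesian closedness is used --- the map $h$ is algebraically exponentiable, so $h^*$ admits a right adjoint $\Phi_h$; consequently $h^*$ preserves all colimits that exist in $Pt_{Y'}\mathbb C$, and in particular cokernels and coequalizers.

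Finally I would combine the two. Writing $A_{Y'}(P)$ as the cokernel of $s_0\colon P\rightarrowtail P\times_{Y'}P$, left exactness lets $h^*$ transport this cokernel diagram to the cokernel diagram of $s_0\colon h^*(P)\rightarrowtail h^*(P)\times_Y h^*(P)$, while the existence of $\Phi_h$ guarantees that $h^*$ preserves the cokernel itself; hence $h^*(A_{Y'}(P))\cong A_Y(h^*(P))$, naturally in $P$, which is precisely peri-abelianness. The real content is thus entirely carried by the existence of the right adjoint $\Phi_h$: once a change of base functor is known to preserve colimits, its compatibility with abelianization is automatic. The only points requiring (routine) care are the verification that each fibre is a regular strongly unital finitely cocomplete category, so that the cokernel description of $A_Y$ applies, together with the left exactness of $h^*$; neither presents a genuine obstacle, so I do not expect a hard step in this proof.
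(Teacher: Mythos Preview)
Your proof is correct and follows exactly the paper's own argument: the change of base functors $h^*$, having right adjoints, preserve cokernels (or coequalizers), and since abelianization is computed as such a colimit, peri-abelianness follows. The paper compresses this into a single sentence, while you have spelled out the routine verifications (finite cocompleteness of the fibres, left exactness of $h^*$, the cokernel description of $A_Y$); the core idea is identical.
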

\proof
This is a straightforward consequence of the fact that the change of basefunctors $h^*$, having a right adjoint, preserve cokernels or coequalizers.  
\endproof 

\subsection{The non-pointed protomodular case}

We shall suppose here that the category is still protomodular, but no longer pointed. We shall show that the algebraic exponentiability of any split monomorphism implies the algebraic exponentiability of any of its retractions, and from that, in the efficiently regular context, of a large class of morphisms. This will be the consequence of a very general result:
\begin{prop}
\label{composite} Let $\mathbb{C}$ be a protomodular category.  For morphisms $f:X\rightarrow Y$ and $g:Y\rightarrow Z$ in $\mathbb{C}$, if $g.f$ and $f$ are algebraically exponentiable, then $g$ is algebraically exponentiable.
\end{prop}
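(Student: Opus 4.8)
The plan is to realise the statement as a lifting of right adjoints through a comonadic functor. The starting point is the pseudofunctoriality of the fibration of points: change of base along a composite decomposes, so there is a canonical natural isomorphism $(g.f)^* \cong f^* \circ g^*$ between functors $Pt_Z\mathbb{C} \to Pt_X\mathbb{C}$, organising the data into the triangle
$$Pt_Z\mathbb{C} \stackrel{g^*}{\longrightarrow} Pt_Y\mathbb{C} \stackrel{f^*}{\longrightarrow} Pt_X\mathbb{C}.$$
By hypothesis $f^*$ has a right adjoint $\Phi_f$ (since $f$ is algebraically exponentiable) and $(g.f)^* = f^*g^*$ has a right adjoint $\Phi_{g.f}$ (since $g.f$ is). The task is to manufacture a right adjoint for the remaining leg $g^*$ out of these two.

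This is where I would invoke protomodularity. As recalled at the opening of this section, in a protomodular category any algebraically exponentiable morphism $h$ makes $h^*$ not merely right-adjointable but \emph{comonadic} \cite{Gr}; in particular $f^*$ is comonadic. Hence $Pt_Y\mathbb{C}$ is, up to equivalence, the category $\mathrm{Coalg}(\mathbb{G})$ of coalgebras for the left exact comonad $\mathbb{G} = f^*\Phi_f$ on $Pt_X\mathbb{C}$, with $f^*$ identified with the forgetful functor. Under this identification $g^*$ is precisely a \emph{lifting}, through the comonadic forgetful functor, of the functor $(g.f)^* \colon Pt_Z\mathbb{C} \to Pt_X\mathbb{C}$, and the latter already carries the right adjoint $\Phi_{g.f}$.

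The core step is then the (dual form of the) adjoint lifting theorem: a functor into a category of coalgebras over a comonadic base admits a right adjoint as soon as its composite with the forgetful functor does and the source possesses equalizers of coreflexive pairs. Since $\mathbb{C}$ is finitely complete, so is each fibre $Pt_Z\mathbb{C}$, which therefore has all the needed equalizers; all hypotheses are met and $g^*$ acquires a right adjoint $\Phi_g$, so $g$ is algebraically exponentiable. Concretely, for a point $P$ over $Y$, viewed as a $\mathbb{G}$-coalgebra $f^*P$, one reconstructs $P$ as the equalizer of the canonical coreflexive pair $\mathbb{G}(f^*P) \rightrightarrows \mathbb{G}^2(f^*P)$ (the comultiplication against $\mathbb{G}$ applied to the coalgebra structure, their common retraction coming from the counit of $\mathbb{G}$), and $\Phi_g(P)$ is the equalizer in $Pt_Z\mathbb{C}$ obtained by applying $\Phi_{g.f}$ to this pair.

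I expect the main obstacle to be exactly the verification that the abstract lifting genuinely applies: checking that the pair in question is coreflexive (so that the mere finite completeness of $Pt_Z\mathbb{C}$ suffices) and, above all, that the resulting equalizer does compute the universal property of a right adjoint to $g^*$ — equivalently, packaging the situation as an instance of Dubuc's adjoint triangle theorem for the triangle above. Should one prefer a self-contained argument, the universal property can be tested by hand, comparing maps $g^*Q \to P$ in $Pt_Y\mathbb{C}$ with maps $(g.f)^*Q \to f^*P$ in $Pt_X\mathbb{C}$ that are compatible with the coalgebra structures; it is precisely the conservativity and comonadicity of $f^*$, guaranteed by protomodularity, that ensure such a compatible datum downstairs descends uniquely to $Pt_Y\mathbb{C}$, closing the argument.
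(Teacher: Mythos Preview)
Your proposal is correct and follows essentially the same route as the paper: set up the triangle $Pt_Z\mathbb{C}\xrightarrow{g^*}Pt_Y\mathbb{C}\xrightarrow{f^*}Pt_X\mathbb{C}$, observe that protomodularity together with the right adjoint $\Phi_f$ makes $f^*$ comonadic (the paper invokes the dual of Beck's weak tripleability theorem here, which is exactly the mechanism behind the comonadicity cited from \cite{Gr}), and then apply the adjoint lifting theorem to obtain $\Phi_g$. Your additional remarks on coreflexive equalizers and the explicit description of $\Phi_g$ are sound elaborations of what the paper leaves implicit in its appeal to \cite{barrwells}.
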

\begin{proof}
Recall that for any morphism $p: E \rightarrow B$ in $\mathbb{C}$, $p^{*}$ preserves all limits and reflects isomorphisms.  Therefore if $p^{*}$ has a right adjoint, then by the dual of the Weak Tripleability Theorem \cite{maclane}, $p^{*}$ is comonadic. The result follows from the well-known \emph{adjoint functor lifting theorem} (see e.g. \cite{barrwells}) applied to the diagram of functors:
$$
\xymatrix@R=2.5pc@C=2.5pc{
 Pt_Z(\mathbb{C}) \ar[rr]^{g^{*}} \ar@<0.5ex>[dr]^{(g.f)^*} & & Pt_Y(\mathbb{C}) \ar@<0.5ex>[dl]^{f^*} \\
 &Pt_X\mathbb{C}\ar@<0.5ex>[ur]^{\Phi_{f}}\ar@<0.5ex>[ul]^{\Phi_{gf}}&
}
$$
in which $\Phi_{gf}$ and $\Phi_{f}$ are the right adjoints to the functors $(g.f)^*$ and $f^*$ respectively.
\end{proof}

Whence the following theorem:
\begin{theo}
Let $\mathbb C$ be a protomodular category. Suppose that any split monomorphism $s$ is algebraically exponentiable. Then any of its retractions $f$ is algebraically exponentiable. When moreover $\mathbb C$ is efficiently regular, any map $h:X\rightarrow Y$ is algebraically exponentiable provided that its domain $X$ has a global support.
\end{theo}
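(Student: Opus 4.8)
The plan is to treat the two assertions separately, deriving the first directly from Proposition \ref{composite} and then bootstrapping it, via the efficiently regular machinery, into the second.

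For the first assertion, let $s:Y\rightarrowtail X$ be a split monomorphism and $f:X\rightarrow Y$ one of its retractions, so that $f.s=1_Y$. Since the identity $1_Y$ is trivially algebraically exponentiable (its change of base functor $1_Y^*$ is the identity functor, with itself as right adjoint) and $s$ is algebraically exponentiable by hypothesis, I would apply Proposition \ref{composite} to the pair $(s,f)$: there the composite $g.f$ is $f.s=1_Y$ and the first factor is $s$, so the conclusion is exactly that the second factor $f$ is algebraically exponentiable. The point to observe is that a retraction of a split monomorphism is precisely a split epimorphism; hence this first assertion says that \emph{every} split epimorphism of $\mathbb C$ is algebraically exponentiable, which, by the characterization recalled just after the definition of f.a.c.c., means that $\mathbb C$ is fiberwise algebraically cartesian closed.

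For the second assertion I would first record the elementary fact, complementary to Proposition \ref{composite} and requiring no protomodularity, that algebraically exponentiable morphisms are closed under composition: for composable $u,v$ one has $(v.u)^*=u^*.v^*$, a composite of functors each of which admits a right adjoint, so the composite admits one too. Next, since $\mathbb C$ is now fiberwise algebraically cartesian closed and, by assumption, efficiently regular, the second part of Corollary \ref{regexp} applies, so that every regular epimorphism of $\mathbb C$ is algebraically exponentiable. Now take $h:X\rightarrow Y$ with $X$ of global support, i.e. with $\tau_X:X\rightarrow 1$ a regular epimorphism. The key observation is that the product projection $p_Y:X\times Y\rightarrow Y$ is the pullback of $\tau_X$ along $\tau_Y:Y\rightarrow 1$; since regular epimorphisms are stable under pullback in the regular category $\mathbb C$, the map $p_Y$ is a regular epimorphism and hence, by the previous step, algebraically exponentiable. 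On the other hand the graph $(1_X,h):X\rightarrowtail X\times Y$ is a split monomorphism (split by $p_X$), hence algebraically exponentiable by hypothesis. As $h=p_Y.(1_X,h)$ is a composite of two algebraically exponentiable morphisms, the composition-closure fact yields that $h$ is algebraically exponentiable.

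The main obstacle is conceptual rather than computational. One must resist reading off $h$ from the factorization $h=p_Y.(1_X,h)$ through Proposition \ref{composite}, which concludes about the \emph{wrong} factor; the correct tool is instead the trivial closure of exponentiable maps under composition, used in the forward direction. The second delicate point is to recognize that global support of $X$ is exactly what is needed to make the product projection $p_Y$ a regular epimorphism, thereby allowing Corollary \ref{regexp}, and behind it the whole efficiently regular discrete-cofibration construction, to be brought to bear.
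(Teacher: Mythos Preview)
Your proof is correct and follows essentially the same approach as the paper: the first assertion via Proposition \ref{composite} applied to $f.s=1_Y$, and the second via the factorization $h=p_Y.(1_X,h)$ with $(1_X,h)$ a split monomorphism and $p_Y$ a regular epimorphism handled by Corollary \ref{regexp}. You are in fact more explicit than the paper on two points it leaves implicit, namely that the first assertion makes $\mathbb C$ fiberwise algebraically cartesian closed (needed to invoke the second clause of Corollary \ref{regexp}) and that algebraically exponentiable maps are closed under composition.
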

\proof
The first point is a straighforward consequence of the previous proposition. Moreover, given any map $h$ in $\mathbb C$, we get $h=p_Y.(1_X,h)$, where the map $(1_X,h)$ is a monomorphism split by $p_X$, and consequently the change of base functor along it admits a right adjoint by assumption. When $\mathbb C$ is regular and $X$ has global support, the map $p_Y$ is a regular epimorphism. Now when $\mathbb C$ is efficiently regular, the change of base functor along $p_Y$ admits a right adjoint by Proposition \ref{regexp}. 
\endproof

\subsection{Back to the pointed case}

The previous construction of the right adjoint obviously applies in the pointed case and gives an alternative description of the centralizers. First, let  $(f,s):X\rightleftarrows Y$ be any split epimorphism; we get a classifying map $\gamma_{(f,s)}$:
$$
\xymatrix@=20pt{
{K\;} \ar@{>->}[rr]^{k} \ar@(l,l)[dd]_{1_K} \ar@{.>}[d]_{\gamma_{(f,s)}} && X  \ar@{->>}[rr]^{f} \ar@{.>}[d]_{Y\ltimes \gamma_{(f,s)}}  && Y \ar@{=}[d] \ar@{>->}@<1ex>[ll]^{s}\\
{\L(Y,K)\;} \ar@{>->}[rr]_{\kappa_K^Y} \ar[d]_{\l_K^Y} && Y\ltimes \L(Y,K) \ar@{->>}[rr]^{\psi_K^Y} && {\;Y} \ar@{>->}@<1ex>[ll]^{\zeta_K^Y}\\
K
              }
$$
Then $\Phi_Y[f,s]$ is given by the following equalizer:
$$
\xymatrix@=30pt{
{\Phi_Y[f,s]\;} \ar@{>->}[r]^{} &  K \ar@<1ex>[r]^{\S_K^Y} \ar@<-1ex>[r]_{\gamma_{(f,s)}} & \L(Y,K)
              }
$$
In particular the centralizer of a subobject $u:Y\rightarrowtail X$ is given by the following equalizer:
$$
\xymatrix@=30pt{
{Z[u]\;} \ar@{>->}[r]^{\zeta_u} &  X \ar@<1ex>[r]^{\S_X^Y} \ar@<-1ex>[r]_{\gamma_{(p_Y,(1,u))}} & \L(Y,X)
              }
$$

Let us make explicit these constructions in the category $Gp$. First we have: $\gamma_{(f,s)}(k)(y)=s(y).k.s(y^{-1})$, and consequently: $\Phi_Y[f,s]=\{k\in K/ \forall y\in Y, \; k=s(y).k.s(y^{-1})\}$, and thus, of course, $Z[u]=\{x\in X/ \forall y\in Y, \; x=y.x.y^{-1}\}$.

\end{document}